\numberwithin{equation}{section}
\newtheorem{theorem}{Theorem}[section]
\newtheorem{lemma}[theorem]{Lemma}
\newtheorem{thm}[theorem]{Theorem}
\newtheorem{defn}[theorem]{Definition}
\newtheorem{rmk}[theorem]{Remark}
\newcommand{\Rmnum}[1]{\expandafter\@slowromancap\romannumeral #1@}
\begin{document}

\title{Error Estimates of Reiterated Stokes Systems via Fourier Transform Methods}
\author{Yiping Zhang\footnote{Email:zhangyiping161@mails.ucas.ac.cn}\\Academy of Mathematics and Systems Science, CAS;\\
University of Chinese Academy of Sciences;\\
Beijing 100190, P.R. China.}
\date{}
\maketitle
\begin{abstract}
In this paper, we are interested in the error estimates of the reiterated Stokes systems in a bounded $C^{1,1}$ domain with Dirichlet boundary conditions. And we have obtained the $O(\varepsilon)$ error estimates for the velocity term and $O(\varepsilon^{1/2})$ error estimates for the pressure term. Compared to the general homogenization of Stokes systems problems, the difficulty in the reiterated homogenization is that we need to handle the different scales of $x$. To overcome this difficulty, we use the Fourier transform methods which was firstly introduced by the author in \cite{Zhang1909} to separate these different scales. We also note that this method may be adapted to a more general multi-scale homogenization problem.
\end{abstract}

\section{Introduction and main results}
Before we state the introduction and the main results, we introduce the Einstein summation convention first.
Throughout this paper, we use the Einstein summation convention: an index occurring twice in a product is to be summed from 1 up to the space dimension, which means, for example,
\begin{equation*}
u_i v_i=\sum_{i=1}^{n}u_i v_i,
\end{equation*} if the space dimension is $n$.

The aim of the present paper is to study the error estimates of reiterated Dirichlet problems for Stokes systems with rapidly oscillating periodic coefficients. More precisely, let $\Omega\subset\mathbb{R}^n$ be a bounded domain with $n\geq 2$, and consider the following reiterated Dirichlet problems for Stokes systems depending on a parameter $\varepsilon>0$,

\begin{equation}
\left\{
\begin{aligned}
\mathcal{L}_{\varepsilon} u_{\varepsilon} +\nabla p_\varepsilon&=f  \text {\quad\ \ \  in } \Omega, \\
\operatorname{div}u_\varepsilon&=h \text {\quad\ \ \  in } \Omega,\\
u_{\varepsilon}&=g  \text {\quad\ \ \  on } \partial \Omega,
\end{aligned}\right.
\end{equation}
with the compatibility condition \begin{equation}
\int_\Omega h-\int_{\partial\Omega}g\cdot n=0,
\end{equation} where $n$ denotes the outward unit normal to $\partial\Omega$. Here $\varepsilon>0$ is a small parameter and the operator $\mathcal{L}_\varepsilon$ is defined by
\begin{equation}
\mathcal{L}_{\varepsilon}=-\operatorname{div}(A(x / \varepsilon,x/\varepsilon^2) \nabla)=-\frac{\partial}{\partial x_{i}}\left[a_{i j}^{\alpha \beta}\left(\frac{x}{\varepsilon},\frac{x}{\varepsilon^2}\right) \frac{\partial}{\partial x_{j}}\right]
\end{equation} with $1\leq i,j,\alpha,\beta\leq n$.\\

Given constants $\mu>0$, and $M>0$ such that the coefficient matrix $A(y,z)=(a_{ij}^{\alpha\beta}(y,z))$ is real, bounded measurable, and satisfies the following conditions.\\
$\bullet $ The ellipticity condition.
\begin{equation}
\mu|\xi|^{2} \leq a_{i j}^{\alpha \beta}(y) \xi_{i}^{\alpha} \xi_{j}^{\beta} \leq \frac{1}{\mu}|\xi|^{2} \quad \text { for } y,z \in \mathbb{R}^{n} \text { and } \xi=\left(\xi_{i}^{\alpha}\right) \in \mathbb{R}^{n \times n},
\end{equation}
$\bullet $ The smoothness condition. There exist a constant $M>0$, such that for any $y_1,y_2,z\in \mathbb{R}^n$, there holds
\begin{equation}\label{1.3}|A(y_1,z)-A(y_2,z)|\leq M|y_1-y_2|.\end{equation}
$\bullet $ The periodicity condition.
\begin{equation}\label{1.4}A(y,z) \text { is } Y-Z \text { periodic.}\end{equation}

For simplicity, we may assume $Y=Z=(0,1)^n$. From the asymptotic expansion, we can obtain the following correctors for the reiterated Stokes system,
\begin{equation}\left\{
\begin{aligned}
\mathcal{A}_1(\chi_k^\beta(y,z)-P_k^\beta(z))+\nabla_z\pi_k^\beta(y,z)&=0 \text{\quad in }Z,\\
\operatorname{div}_z\chi_k^\beta(y,z)&=0\text{\quad in }Z,\\
\fint_Z\chi_k^\beta(y,z)dz=0,\ \ \fint_Z\pi_k^\beta(y,z)&=0,
\end{aligned}\right.\end{equation}
where \begin{equation}(\mathcal{A}_1u)^\alpha= -\frac{\partial}{\partial z_{i}}\left(a_{i j}^{\alpha\beta}(y, z) \frac{\partial u^\beta}{\partial z_{j}}\right),
\end{equation} $P_j^\beta(y)=y_je^\beta=y_j(0,\cdots,1,\cdots,0)$ with 1 in the $\beta$-th position, and
\begin{equation}\left\{
\begin{aligned}
\mathcal{A}_2(\chi_k^\beta(y)-P_k^\beta(y))+\nabla_y\pi_k^\beta(y)&=0 \text{\quad in }Y,\\
\operatorname{div}_y\chi_k^\beta(y)&=0\text{\quad in }Y,\\
\fint_Y\chi_k^\beta(y)dy=0,\ \ \fint_Y\pi_k^\beta(y)&=0,
\end{aligned}\right.\end{equation}
where \begin{equation}\begin{aligned}(\mathcal{A}_2u)^\alpha=& -\frac{\partial}{\partial y_{i}}\left[a_{2,ij}^{\alpha\beta}(y)\frac{\partial u^\beta}{\partial y_j}\right]\\
=& -\frac{\partial}{\partial y_{i}}\left[\left(\fint_Z\left(a_{i j}^{\alpha\beta}(y,z)-a_{i k}^{\alpha\gamma}(y,z) \frac{\partial \chi_j^{\gamma\beta}(y,z)}{\partial z_{k}}\right)dz\right)\frac{\partial u^\beta}{\partial y_j}\right],
\end{aligned}\end{equation} with $1\leq i,j,k,\alpha,\beta,\gamma\leq n$. Consequently, the homogenized equation is
\begin{equation}
\left\{
\begin{aligned}
\mathcal{L}_{0} u_{0} +\nabla p_0&=f  \text {\quad in } \Omega, \\
\operatorname{div}u_0&=h \text {\quad in } \Omega,\\
u_{\varepsilon}&=g  \text {\quad on } \partial \Omega,
\end{aligned}\right.
\end{equation} with $$(\mathcal{L}_0u_0)^\beta=-\frac{\partial}{\partial x_{i}}\left(\widehat{a}_{i j}^{\alpha \beta} \frac{\partial u^\beta}{\partial x_{j}}\right),$$
where the operator $\widehat{A}=(\widehat{a}_{ij}^{\alpha\beta})$ is a constant matrix defined as
\begin{equation}\label{1.6}
\hat{a}_{i j}^{\alpha\beta}=\frac{1}{|Y||Z| }\iint_{Y \times Z}\left[a_{i j}^{\alpha\beta}-a_{i k}^{\alpha\gamma} \frac{\partial \chi_{j}^{\gamma\beta}(y,z)}{\partial z_{k}}-a_{i k}^{\alpha\gamma} \frac{\partial \chi_{j}^{\gamma\beta}(y)}{\partial y_{k}}+a_{i k}^{\alpha\gamma} \frac{\partial \chi_{l}^{\gamma\sigma}(y,z)}{\partial z_{k}} \frac{\partial \chi_{j}^{\sigma\beta}(y)}{\partial y_{l}}\right] d y d z.
\end{equation}

Note that due to $a_{ij}(y,z)$ is Y-Z periodic, then the solution $(\chi_k(y,z),\pi_k(y,z)$ of the equation $(1.7)$ is also Y-Z periodic, which is useful for the Fourier transform methods.
Throughout this paper, we use the following notation
$$H^m_{\text{per}(Y)}=:\left\{f\in H^m(Y) \text{ and }f\text{ is Y-periodic with }\fint_Yfdy=0\right\}. $$

The following theorem is the main result of the paper, which establishes the $O(\varepsilon)$ convergence rates in $L^2(\Omega)$ for the Dirichlet problems.
\begin{thm} (convergence rates for the velocity term). Let $\Omega\subset \mathbb{R}^n$ be a bounded $C^{1,1}$ domain, and assume that $A(y,z)$ satisfies the
conditions $(1.4),\ (\ref{1.3})$ and $(\ref{1.4})$. Given $h\in H^1(\Omega),$ and $g\in H^{3/2}(\partial \Omega;\mathbb{R}^n)$ satisfying the compatibility condition $(1.2)$, for $f\in L^2(\Omega;\mathbb{R}^n)$, let $(u_\varepsilon,p_\varepsilon)$, $(u_0,p_0)$ be the weak solutions of $(1.1)$ and $(1.11)$,  respectively. Then there holds the following estimates \begin{equation*}
||u_\varepsilon-u_0||_{L^2(\Omega)}\leq C\varepsilon||u_0||_{H^2(\Omega)},
\end{equation*} where $C$ depends on $\mu,n$ and $\Omega$.\end{thm}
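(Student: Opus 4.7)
The plan is to follow the standard two-scale strategy: build a corrected ansatz whose error against $u_\varepsilon$ satisfies a Stokes-type equation with small right-hand side, then bound the error in $H^1$ and upgrade to $L^2$ by duality. Specifically, I would set
\begin{equation*}
v_\varepsilon(x)=u_0(x)+\varepsilon\,\chi_k^\beta(x/\varepsilon,x/\varepsilon^2)\,\partial_{x_k}(S_\varepsilon u_0^\beta)(x)+\varepsilon\,\chi_k^\beta(x/\varepsilon)\,\partial_{x_k}(S_\varepsilon u_0^\beta)(x),
\end{equation*}
where $S_\varepsilon$ is a Steklov-type smoothing operator combined with a cutoff supported away from $\partial\Omega$ at distance $\sim\varepsilon$. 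This regularizes $u_0\in H^2(\Omega)$ so that the correctors can be differentiated, and at the same time removes the boundary trace issue that would otherwise require a separate boundary-layer construction. Set $w_\varepsilon:=u_\varepsilon-v_\varepsilon$ and choose an associated pressure correction $q_\varepsilon$ using $\pi_k^\beta(y,z)$ and $\pi_k^\beta(y)$.

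Next, substitute $v_\varepsilon$ into $\mathcal{L}_\varepsilon$ and use the cell problem equations (1.7) and (1.9) together with the formula (1.14) for $\widehat A$ to cancel the principal terms, so that $\mathcal{L}_\varepsilon w_\varepsilon+\nabla q_\varepsilon$ collects remainder terms that are either (i) localized near $\partial\Omega$ (from the cutoff), (ii) of order $\varepsilon$ in strong norms, or (iii) mixed-scale cross terms of the form
\begin{equation*}
F_\varepsilon(x)=\Phi(x/\varepsilon,x/\varepsilon^2)\,\psi(x),\qquad \iint_{Y\times Z}\Phi(y,z)\,dy\,dz=0,
\end{equation*}
with $\psi$ involving derivatives of $S_\varepsilon u_0$.

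The main obstacle is class (iii): differentiating $\chi_k^\beta(x/\varepsilon,x/\varepsilon^2)$ produces a mixture of $\partial_y\chi/\varepsilon$ and $\partial_z\chi/\varepsilon^2$ terms evaluated at the two different scales, which cannot be dualized to a single "flux corrector" as in the one-scale theory. Here I would invoke the Fourier transform device of \cite{Zhang1909}: expand $\Phi$ in a Fourier series on $Y\times Z$, so that at $y=x/\varepsilon$, $z=x/\varepsilon^2$ the mode with indices $(m,n)\in\mathbb{Z}^n\times\mathbb{Z}^n\setminus\{(0,0)\}$ oscillates at spatial frequency $m/\varepsilon+n/\varepsilon^2=(\varepsilon m+n)/\varepsilon^2$, which is nonzero. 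Integration by parts in $x$ against a test function then gains a factor $\varepsilon^2/|\varepsilon m+n|$; summing over $(m,n)$ and using the decay of Fourier coefficients (guaranteed by smoothness of $\chi$ and $\pi$ in $z$, with the $C^{1,1}$ regularity in $y$) yields an $H^{-1}$ bound of order $\varepsilon$ on $F_\varepsilon$.

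Once this is established, the classical energy estimate for Stokes systems applied to the equation satisfied by $w_\varepsilon$ gives $\|w_\varepsilon\|_{H^1(\Omega)}\le C\sqrt{\varepsilon}\,\|u_0\|_{H^2(\Omega)}$, the $\sqrt{\varepsilon}$ loss coming from the $O(\varepsilon)$-thick boundary strip. To reach the $L^2$ rate, I would then run an Aubin--Nitsche duality argument: for arbitrary $\varphi\in L^2(\Omega)$ solve the homogenized dual Stokes system with $\varphi$ as data, use its $H^2$ regularity (this is where the $C^{1,1}$ assumption on $\Omega$ is needed), build the corresponding corrected ansatz, and pair it with $u_\varepsilon-u_0$. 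The mixed-scale terms that appear in this pairing are again controlled by the same Fourier-series mechanism, and the boundary-strip contribution is handled by Cauchy--Schwarz since both factors live on a set of measure $O(\varepsilon)$. This produces the desired bound $\|u_\varepsilon-u_0\|_{L^2(\Omega)}\le C\varepsilon\|u_0\|_{H^2(\Omega)}$.
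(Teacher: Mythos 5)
Your overall architecture (corrected ansatz with smoothing and an $\varepsilon$-cutoff, flux-corrector cancellation, energy estimate with the $\sqrt{\varepsilon}$ boundary-layer loss, then Aubin--Nitsche duality using $H^2$ regularity of the homogenized dual Stokes problem in the $C^{1,1}$ domain) is exactly the paper's, but the two ingredients you would actually have to compute with are both off. First, the ansatz: the fine-scale corrector must enter as $\varepsilon^2\chi_j^{\beta\alpha}(x/\varepsilon,x/\varepsilon^2)\bigl[\partial_{x_j}(\cdots)-\partial_{y_j}\chi_k^{\alpha\gamma}(x/\varepsilon)\partial_{x_k}(\cdots)\bigr]$, as in $(3.3)$, not as $\varepsilon\,\chi_j(x/\varepsilon,x/\varepsilon^2)\partial_{x_j}(S_\varepsilon u_0)$. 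With weight $\varepsilon$ the chain rule produces $\varepsilon\cdot\varepsilon^{-2}\nabla_z\chi=\varepsilon^{-1}\nabla_z\chi$, an uncontrolled $O(\varepsilon^{-1})$ term in $\nabla v_\varepsilon$; and without the $-\partial_{y_j}\chi_k(y)\partial_k u_0$ factor inside the bracket, the product term $a_{ik}\partial_{z_k}\chi_l(y,z)\partial_{y_l}\chi_j(y)$ appearing in the formula $(1.12)$ for $\widehat{A}$ is never generated, so the leading-order cancellation against $\widehat{A}\nabla u_0$ fails and the remainder is not mean-zero, which kills the flux-corrector step.

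Second, the Fourier device you describe is not the one that works here, and as stated it has a genuine hole. Expanding in a double Fourier series and integrating by parts against $e^{2\pi i(m/\varepsilon+n/\varepsilon^2)\cdot x}$ gains only $\varepsilon^2/|\varepsilon m+n|$, and the combined frequency is \emph{not} bounded away from zero: for $\varepsilon=1/N$ the modes $(m,n)=(-Nn,n)$ are exactly resonant and contribute $O(1)$ constants, and for general $\varepsilon$ one has $|\varepsilon m+n|$ arbitrarily small along $m\approx -n/\varepsilon$. Controlling those modes would require rapid decay of the coefficients in $m$, i.e.\ high smoothness of $A$ in $y$, whereas the paper assumes only the Lipschitz condition $(1.5)$. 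The paper avoids resonances entirely: it expands each mixed-scale amplitude in a Fourier series in \emph{one} periodic variable only (in $z$ for $E_{1},E_{3},q_{1},q_{3}$, in $y$ for terms like $\operatorname{div}_z\chi(y,z)$), bounds $|e^{2\pi i k z}|=1$, and applies the smoothing-operator estimates $(2.27)$--$(2.28)$ to the resulting single-scale periodic factor; the factor of $\varepsilon$ comes from the explicit $\varepsilon$ and $\varepsilon^2$ prefactors in the identities $\partial_{z_k}E=\varepsilon^2\partial_{x_k}E-\varepsilon\partial_{y_k}E$ of Lemmas 2.6--2.8, not from non-stationary phase. You would need to replace your step (iii) by this mechanism (or supply a Diophantine/extra-regularity argument the hypotheses do not support) for the proof to close.
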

In this paper, we also obtain $O(\varepsilon^{1/2})$ rates for the pressure term $p_\varepsilon$, which is stated in the following Theorem.
\begin{thm} (convergence rates for the pressure term). Let $\Omega\subset \mathbb{R}^n$ be a bounded $C^{1,1}$ domain, and assume that $A(y,z)$ satisfies the
conditions $(1.4),\ (\ref{1.3})$ and $(\ref{1.4})$. Given $h\in H^1(\Omega),$ and $g\in H^{3/2}(\partial \Omega;\mathbb{R}^n)$ satisfying the compatibility condition $(1.2)$, for $f\in L^2(\Omega;\mathbb{R}^n)$, let $(u_\varepsilon,p_\varepsilon)$, $(u_0,p_0)$ be the weak solutions of $(1.1)$ and $(1.11)$,  respectively. Moreover, if $\int_\Omega p_\varepsilon=\int_\Omega p_0=0$, then there holds the following estimates
\begin{equation}||p_\varepsilon-p_0+\tilde{\pi}-\int_\Omega\tilde{\pi}||_{L^2(\Omega)}\leq C\varepsilon^{1/2}||u_0||_{H^2(\Omega)},\end{equation}
where $\tilde{\pi}=\pi_k^\beta(y,z)\psi_{2\varepsilon}S_\varepsilon(\partial_{k}u_0^\beta)+\pi_k^\beta(y)\psi_{2\varepsilon}S_\varepsilon(\partial_{k}u_0^\beta)
-\pi_j^\gamma(y,z)\partial_{y_j}\chi_k^{\gamma\beta}(y)\psi_{2\varepsilon}S_\varepsilon(\partial_{k}u_0^\beta)$ with $y=x/\varepsilon$, and $z=x/\varepsilon^2$, in which $\psi_\varepsilon$ is a cut-off function defined in $(3.1)$ and $S_\varepsilon$ is the smoothing operator defined in $(2.26)$ and $C$ depends on $\mu,n$ and $\Omega$.\end{thm}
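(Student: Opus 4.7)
The plan is to extend the two-scale expansion from the proof of Theorem 1.1 and then turn the resulting velocity estimate into a pressure estimate via a Bogovskii-type duality argument. First I would set up the corrected velocity
\begin{equation*}
w_\varepsilon:=u_\varepsilon-u_0-\varepsilon\chi_k^\beta(y,z)\psi_{2\varepsilon}S_\varepsilon(\partial_ku_0^\beta)-\varepsilon\chi_k^\beta(y)\psi_{2\varepsilon}S_\varepsilon(\partial_ku_0^\beta),\quad y=x/\varepsilon,\ z=x/\varepsilon^2,
\end{equation*}
together with the corrected pressure $q_\varepsilon:=p_\varepsilon-p_0+\tilde\pi-\fint_\Omega\tilde\pi$. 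Substituting the ansatz into $(1.1)$ and expanding the residual in powers of $\varepsilon$, the cell problems $(1.7)$ and $(1.9)$ together with the formula $(1.13)$ for $\widehat A$ force the $O(\varepsilon^{-2})$, $O(\varepsilon^{-1})$ and $O(1)$ bulk terms to cancel; the Fourier-transform device of \cite{Zhang1909} separates the contributions depending on $y$ from those depending on $z$, and the cross term $-\pi_j^\gamma(y,z)\partial_{y_j}\chi_k^{\gamma\beta}(y)$ in $\tilde\pi$ is produced by the $z$-oscillation generated by the slow corrector $\chi_k^\beta(y)\partial_ku_0^\beta$. What remains is a Stokes-type system
\begin{equation*}
\mathcal{L}_\varepsilon w_\varepsilon+\nabla q_\varepsilon=\operatorname{div}\Phi_\varepsilon+G_\varepsilon,\quad \operatorname{div}w_\varepsilon=H_\varepsilon,\quad w_\varepsilon|_{\partial\Omega}=0,
\end{equation*}
in which $\Phi_\varepsilon$, $G_\varepsilon$, $H_\varepsilon$ split into an interior piece of size $O(\varepsilon)$ and a boundary-layer piece supported in $\Omega_{4\varepsilon}:=\{x\in\Omega:\operatorname{dist}(x,\partial\Omega)<4\varepsilon\}$.

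Second, to extract the $L^2$ pressure estimate, I would use the Bogovskii operator: since $\int_\Omega q_\varepsilon=0$, there exists $\phi\in H^1_0(\Omega;\mathbb{R}^n)$ with $\operatorname{div}\phi=q_\varepsilon$ and $\|\phi\|_{H^1_0(\Omega)}\leq C\|q_\varepsilon\|_{L^2(\Omega)}$. Testing the momentum equation against $\phi$ and integrating by parts yields
\begin{equation*}
\|q_\varepsilon\|_{L^2(\Omega)}^2=\int_\Omega A(y,z)\nabla w_\varepsilon:\nabla\phi-\int_\Omega\Phi_\varepsilon:\nabla\phi-\int_\Omega G_\varepsilon\cdot\phi,
\end{equation*}
so the proof reduces to bounding $\|\nabla w_\varepsilon\|_{L^2(\Omega)}$, $\|\Phi_\varepsilon\|_{L^2(\Omega)}$ and $\|G_\varepsilon\|_{H^{-1}(\Omega)}$.

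Third, each of these pieces is of order $\varepsilon^{1/2}$: the boundary-strip contributions carry a factor $|\Omega_{4\varepsilon}|^{1/2}\leq C\varepsilon^{1/2}$, while the interior contributions are genuinely $O(\varepsilon)$. For $\|\nabla w_\varepsilon\|_{L^2}$ I would rerun the same energy argument used for Theorem 1.1 but measured in $H^1$, combined with the property $\|S_\varepsilon g-g\|_{L^2}\leq C\varepsilon\|\nabla g\|_{L^2}$ of the smoothing operator and the $H^2$ bound on $u_0$; combining these with the above duality identity then gives the claimed rate.

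The main obstacle is the exact bookkeeping of the residual produced by the two-scale ansatz: one must verify that \emph{all} terms of order $\varepsilon^{-2}$ and $\varepsilon^{-1}$ cancel, which forces the precise form of $\tilde\pi$ including the cross-term, and here the Fourier-transform method of \cite{Zhang1909} is essential to decouple the $y$- and $z$-dependence that prevents a direct cell-by-cell computation. A secondary difficulty is that the boundary-strip loss $\varepsilon^{1/2}$ is saturated inside $\Phi_\varepsilon$ by the non-cut-off piece $(1-\psi_{2\varepsilon})\chi_k(y,z)\nabla u_0$, which is of unit order on $\Omega_{4\varepsilon}$; this is precisely why the pressure rate is $\varepsilon^{1/2}$ rather than $\varepsilon$.
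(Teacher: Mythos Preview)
Your overall three-step strategy (residual Stokes system, Bogovski\u{\i}/energy estimate, split into interior and boundary-layer pieces) matches the paper's, and your explanation of why the rate is $\varepsilon^{1/2}$ is correct. The gap is in the corrector ansatz itself.

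With $z=x/\varepsilon^{2}$, the fast corrector $\chi_{k}^{\beta}(y,z)$ defined by the $Z$-cell problem (1.7) must carry the prefactor $\varepsilon^{2}$, not $\varepsilon$: differentiating $\varepsilon\chi(y,z)$ in $x$ produces $\varepsilon^{-1}\partial_{z}\chi$, so your $\Phi_{\varepsilon}$ would contain an $O(\varepsilon^{-1})$ contribution and the $L^{2}$ bound you need would fail outright. The paper's velocity corrector (equation~(3.3)) is
\begin{equation*}
w_{\varepsilon}=u_{\varepsilon}-u_{0}+\varepsilon\chi_{j}(y)\,\psi_{2\varepsilon}S_{\varepsilon}(\partial_{j}u_{0})
+\varepsilon^{2}\chi_{j}(y,z)\bigl[\psi_{2\varepsilon}S_{\varepsilon}(\partial_{j}u_{0})
-\partial_{y_{j}}\chi_{k}(y)\,\psi_{2\varepsilon}S_{\varepsilon}(\partial_{k}u_{0})\bigr],
\end{equation*}
and the second bracket---the cross piece $\varepsilon^{2}\chi(y,z)\partial_{y}\chi(y)$---is not optional. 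Without it the leading residual $H_{2}$ does not decompose into the three mean-zero fluxes $I_{1},I_{2},I_{3}$ of Lemmas~2.6--2.8, and the cancellation you assert at orders $\varepsilon^{-1}$ and $\varepsilon^{0}$ does not occur. The cross term in $\tilde{\pi}$ is not produced by the slow corrector alone; it appears because the velocity ansatz already contains that cross term, and the associated flux $I_{3}$ then delivers $-\pi_{j}^{\gamma}(y,z)\partial_{y_{j}}\chi_{k}^{\gamma\beta}(y)$ via $\partial_{z_{i}}q_{3,ik}$.

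A second point: the paper does not set the corrected pressure equal to $p_{\varepsilon}-p_{0}+\tilde{\pi}-\fint\tilde{\pi}$ directly. It introduces an intermediate pressure $z_{\varepsilon}$ (equation~(3.9)) built from the flux potentials $q_{1,ik},q_{2,ik},q_{3,ik}$ so that the residual $\tilde{f}$ of the pair $(w_{\varepsilon},z_{\varepsilon})$ genuinely lies in $L^{2}(\Omega)$; the uniform Stokes estimate (Lemma~2.2, which is the same as your Bogovski\u{\i} step) then gives $\|z_{\varepsilon}-\fint z_{\varepsilon}\|_{L^{2}}\le C\varepsilon^{1/2}\|u_{0}\|_{H^{2}}$ (Theorem~3.4). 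Only afterwards is $z_{\varepsilon}$ expanded to reveal $\tilde{\pi}$ plus five small remainder terms handled by the Fourier method. Skipping this $q_{j,ik}$ layer leaves you with a residual that is only in $H^{-1}$, which is not enough for the pressure bound.
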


The convergence rate is one of the central issues in homogenization theory and has been studied extensively in the various setting. For elliptic equations and systems in divergence form with periodic coefficients, related results may be found in the recent work \cite{shen2018periodic,Kenig2010Homogenization,shen2017boundary,kenig2012convergence}.

For the homogenization of Stokes systems problems, the authors in \cite{Shu2015Homogenization} have established the interior Lipschitz estimates for
the velocity and $L^\infty$ estimates for the pressure as well as the $W^{1,p}$ estimates in a bounded $C^1$ domain for any $1<p<\infty$ under the smoothness condition: the coefficients matrix $A(x)\in VMO(\mathbb{R}^n)$.

Gu \cite{Shu2015Convergence} has obtained the following sharp $O(\varepsilon)$ error estimates:
$$||u_\varepsilon-u_0||_{L^2(\Omega)}\leq C \varepsilon||u_0||_{H^2(\Omega)},$$
as well as the $O(\varepsilon^{1/2})$ error estimates for the pressure term with $\Omega$ a bounded $C^{1,1}$ domain for the Stokes systems problems. Later, Xu \cite{xu2017convergence} generalizes this problem to Lipschitz domain, and has obtained the error estimates as well as the $W^{1,p}$ estimates, where $|\frac{1}{p}-\frac{1}{2}|<\frac{1}{2n}+\epsilon$ and $\epsilon$ is a positive constant independent of $\varepsilon$.

In this paper, our aim is to obtain the error estimates for the reiterated Stokes systems problems. In order to separate the different scale of $x$ we use the Fourier transform methods which was first introduced by Zhang in \cite{Zhang1909} and obtain the $O(\varepsilon)$ error estimates for the velocity term and $O(\varepsilon^{1/2})$ error estimates for the pressure term.

\section{Preliminaries}
\subsection{basic results}
In this subsection, we introduce the definition of weak solution to the equation $(1.1)$ and recall some basic results which are useful for the correctors estimates.
\begin{defn} We say that $(u_\varepsilon,p_\varepsilon)\in H^1(\Omega;\mathbb{R}^n)\times L^2(\Omega)$ is a weak solution to $(1.1)$, if
$(u_\varepsilon,p_\varepsilon)$ satisfies
\begin{equation}
\begin{array}{l}{\text { 1. } B_{\varepsilon}\left[u_{\varepsilon}, \phi\right]-\int_{\Omega} p_{\varepsilon} \operatorname{div}(\phi) d x=\langle f, \phi\rangle_{H^{-1}(\Omega) \times H_{0}^{1}(\Omega)} \text { for any } \phi \in H_{0}^{1}\left(\Omega ; \mathbb{R}^{d}\right)} \\ {\text { 2. } \operatorname{div}\left(u_{\varepsilon}\right)=h \text { in the distribution sense in } \Omega,} \\ {\text { 3. } u_{\varepsilon}=g \text { in the trace sense on } \partial \Omega}, \end{array}
\end{equation}
where $B_{\varepsilon}[\cdot, \cdot]$  is the bilinear form defined by $$ B_{\varepsilon}[v, w]=\int_{\Omega} a_{i j}^{\alpha \beta}\left(\frac{x}{\varepsilon},\frac{x}{\varepsilon^2}\right) \frac{\partial v^{\beta}}{\partial x_{j}} \frac{\partial w^{\alpha}}{\partial x_{i}} d x \quad \text { for any } w, v \in H^{1}\left(\Omega ; \mathbb{R}^{n}\right).$$
\end{defn}
We firstly introduce the following lemma whose proof may be found in \cite{xu2017convergence}.
\begin{lemma} Suppose $A$ satisfies $(1.4)$. Let $f\in H^{-1}(\Omega;\mathbb{R}^n)$, $h\in L^2(\Omega)$ and $g\in H^{1/2}(\partial\Omega;\mathbb{R}^n)$ with the compatibility condition $(1.2)$. Then the Dirichlet problem $(1.1)$ has a unique weak solution $(u_\varepsilon,p_\varepsilon)\in H^1(\Omega;\mathbb{R}^n)\times L^2(\Omega)$, with $p_\varepsilon$ unique up to constants, and we have the following uniform estimates
\begin{equation}
\left\|u_{\varepsilon}\right\|_{H^{1}(\Omega)}+||p_{\varepsilon}-\fint_\Omega p_\varepsilon||_{L^{2}(\Omega)} \leq C\left\{\|f\|_{H^{-1}(\Omega)}+\|h\|_{L^{2}(\Omega)}+\|g\|_{H^{1/2}(\partial \Omega)}\right\},
\end{equation} where C depends only on $n,\mu$ and $\Omega$.
\end{lemma}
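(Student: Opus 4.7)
The plan is to reduce the inhomogeneous Dirichlet--divergence problem to the standard divergence-free case, solve it by Lax--Milgram on the solenoidal subspace, and recover the pressure by de Rham's theorem together with the Bogovskii inf--sup estimate; the uniform bound then follows from ellipticity plus Korn/Poincar\'e.

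First, I would absorb the boundary data and the nonzero divergence by constructing a lifting $G\in H^1(\Omega;\mathbb{R}^n)$ with $G=g$ on $\partial\Omega$ and $\operatorname{div}G=h$ in $\Omega$. This is achieved by first extending $g$ to some $\tilde g\in H^1(\Omega;\mathbb{R}^n)$ with the usual trace estimate, and then correcting the divergence: the compatibility condition (1.2) gives $\int_\Omega(h-\operatorname{div}\tilde g)=0$, so the Bogovskii operator on the bounded $C^{1,1}$ (in fact Lipschitz) domain $\Omega$ yields $w\in H^1_0(\Omega;\mathbb{R}^n)$ with $\operatorname{div}w=h-\operatorname{div}\tilde g$ and $\|w\|_{H^1(\Omega)}\leq C\|h-\operatorname{div}\tilde g\|_{L^2(\Omega)}$. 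Setting $G=\tilde g+w$ gives $\|G\|_{H^1(\Omega)}\leq C(\|g\|_{H^{1/2}(\partial\Omega)}+\|h\|_{L^2(\Omega)})$. Writing $u_\varepsilon=v_\varepsilon+G$ reduces the problem to finding $(v_\varepsilon,p_\varepsilon)\in H^1_0(\Omega;\mathbb{R}^n)\times L^2(\Omega)$ solving $\mathcal{L}_\varepsilon v_\varepsilon+\nabla p_\varepsilon=F$, $\operatorname{div}v_\varepsilon=0$, where $F=f-\mathcal{L}_\varepsilon G\in H^{-1}(\Omega;\mathbb{R}^n)$ with $\|F\|_{H^{-1}}\leq C(\|f\|_{H^{-1}}+\|h\|_{L^2}+\|g\|_{H^{1/2}})$.

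Next I would solve for $v_\varepsilon$ using Lax--Milgram on the solenoidal subspace $V=\{v\in H^1_0(\Omega;\mathbb{R}^n):\operatorname{div}v=0\}$. The bilinear form $B_\varepsilon[v,w]$ is bounded on $V\times V$ with constant $1/\mu$ by (1.4); for coercivity, the pointwise ellipticity (1.4) gives $B_\varepsilon[v,v]\geq \mu\int_\Omega|\nabla v|^2$, and Poincar\'e's inequality on $H^1_0(\Omega)$ upgrades this to $B_\varepsilon[v,v]\geq c\|v\|_{H^1(\Omega)}^2$. Lax--Milgram therefore produces a unique $v_\varepsilon\in V$ with $B_\varepsilon[v_\varepsilon,\phi]=\langle F,\phi\rangle$ for every $\phi\in V$, together with $\|v_\varepsilon\|_{H^1(\Omega)}\leq C\|F\|_{H^{-1}(\Omega)}$.

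To recover the pressure I would invoke the de Rham--type result: because the linear functional $\phi\mapsto B_\varepsilon[v_\varepsilon,\phi]-\langle F,\phi\rangle$ on $H^1_0(\Omega;\mathbb{R}^n)$ vanishes on $V$, the surjectivity of $\operatorname{div}:H^1_0(\Omega;\mathbb{R}^n)\to L^2_0(\Omega)$ (Bogovskii again) gives the inf--sup condition
\begin{equation*}
\inf_{q\in L^2_0(\Omega)}\sup_{\phi\in H^1_0(\Omega;\mathbb{R}^n)}\frac{\int_\Omega q\,\operatorname{div}\phi}{\|q\|_{L^2}\|\phi\|_{H^1}}\geq \beta>0,
\end{equation*}
and hence the existence of a unique $p_\varepsilon\in L^2_0(\Omega)$ with $\int_\Omega p_\varepsilon\operatorname{div}\phi=B_\varepsilon[v_\varepsilon,\phi]-\langle F,\phi\rangle$ for all $\phi\in H^1_0(\Omega;\mathbb{R}^n)$, and $\|p_\varepsilon\|_{L^2}\leq C(\|v_\varepsilon\|_{H^1}+\|F\|_{H^{-1}})$. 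Uniqueness of $(v_\varepsilon,p_\varepsilon)$ modulo constants in $p_\varepsilon$ follows in the usual way (test the homogeneous problem with $v_\varepsilon$ itself and use coercivity, then use inf--sup for the pressure). Combining these bounds and re-adding $G$ gives the stated uniform estimate.

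The only point requiring actual domain regularity is the Bogovskii step, used in the lifting and in the inf--sup condition; both are standard on bounded Lipschitz domains, hence certainly on $C^{1,1}$ ones, so I do not anticipate any essential obstacle. The rest is a verbatim adaptation of the classical Stokes existence theory to the $\varepsilon$-dependent, bounded measurable coefficient matrix $A(x/\varepsilon,x/\varepsilon^2)$, which enters only through the ellipticity and boundedness constants $\mu$, so the constant $C$ in (2.2) is independent of $\varepsilon$.
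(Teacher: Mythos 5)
Your argument is correct and is precisely the standard existence theory that the paper itself does not reprove but delegates to the cited reference \cite{xu2017convergence}: lifting of $g$ and $h$ via a trace extension plus the Bogovskii operator, Lax--Milgram on the solenoidal subspace (note that the strong pointwise ellipticity $(1.4)$ makes Korn's inequality unnecessary, as you in fact only use Poincar\'e), and pressure recovery by de Rham together with the inf--sup condition. Since the coefficients enter only through $\mu$, the resulting constant is indeed independent of $\varepsilon$, so the proposal is complete and matches the intended proof.
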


We also need the following lemma which states the interior $W^{1,p}$ estimates if the coefficient $A(y)\in VMO(\mathbb{R}^n)$:
\begin{equation}
\sup _{y \in \mathbb{R}^{d}} \fint_{B(y, t)}\left|A-\fint_{B(y, t)} A\right| \leq \omega_{1}(t),
\end{equation} where $w$ is a fixed nondecreasing continuous function on $[0,\infty)$ with $w(0)=0.$ For the proof of Lemma 2.3, see \cite{Shu2015Homogenization} for example.
\begin{lemma}
Suppose that $A(y)$ satisfies the ellipticity condition $(1.4)$ and smoothness condition $(2.3)$. Let $(u,p)\in H^1(B(0,1);\mathbb{R}^n)\times L^2(B(0,1))$ be a weak solution to \begin{equation}
-\operatorname{div}(A(x) \nabla u)+\nabla p=f  \text { and } \operatorname{div}(u)=0 \end{equation} in $B(0,1)$, with $f\in L^q(B(0,1))$  for any  $2<q<\infty$. Then $|\nabla u|\in L^{q}(B(0,1 / 2)) $,  and  \begin{equation}\left(\fint_{B(0,1 / 2)}|\nabla u|^{q}\right)^{1 / q}\leq C(q,n)\left(\fint_{B(0,1)}|\nabla u|^{2}\right)^{1 / 2}+C(q,n)\left(\fint_{B(0,1)}|f|^{q}\right)^{1 / q}.
\end{equation}
\end{lemma}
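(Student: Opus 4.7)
The bound is a local Meyers-type $W^{1,q}$ estimate for a Stokes system whose coefficients are VMO, so the natural approach is the real-variable / perturbation method of Shen adapted to Stokes systems in \cite{Shu2015Homogenization}. The proof splits into three stages: a frozen-coefficient approximation on small balls, a classical interior $L^q$ bound for the constant-coefficient Stokes system, and a real-variable $L^p$ theorem that upgrades an $L^2$ bound on $\nabla u$ to the desired $L^q$ bound.

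\textbf{Local approximation.} For each ball $B = B(x_0, r)$ with $4B \subset B(0,1)$, set $\bar A := \fint_{2B} A$ and decompose $u = v + w$ on $2B$, where $(v,q)$ solves the constant-coefficient Dirichlet problem $-\operatorname{div}(\bar A \nabla v) + \nabla q = 0$, $\operatorname{div} v = 0$ on $2B$ with $v = u$ on $\partial(2B)$. Then $(w, p - q)$ satisfies the original variable-coefficient Stokes system with right-hand side $f + \operatorname{div}((A - \bar A)\nabla v)$ and zero boundary data. The standard $H^1$ energy estimate combined with H\"older and the VMO modulus $\omega_1$ from $(2.3)$ yields a bound of the form $\big(\fint_{2B}|\nabla w|^2\big)^{1/2} \leq C\,\omega_1(cr)\big(\fint_{4B}|\nabla u|^s\big)^{1/s} + Cr\big(\fint_{2B}|f|^2\big)^{1/2}$ for some $s > 2$ arising from the Gehring-type self-improvement of the Caccioppoli inequality.

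\textbf{Constant-coefficient estimate and real-variable assembly.} For the frozen problem, either the explicit Stokes fundamental solution (Fabes--Kenig--Verchota) or Calder\'on--Zygmund / Fourier multiplier theory gives the interior bound $\big(\fint_B|\nabla v|^q\big)^{1/q} \leq C\big(\fint_{2B}|\nabla v|^2\big)^{1/2}$ for every finite $q$. Combining with the previous step produces a local decomposition of $\nabla u$ of exactly the form required by Shen's $L^p$ real-variable theorem (as formulated in \cite{Shu2015Homogenization}). Choosing $r$ small enough that $\omega_1(cr)$ falls below the theorem's smallness threshold, the theorem then delivers the global-on-$B(0,1/2)$ $L^q$ bound of $\nabla u$ by the $L^2$ norm of $\nabla u$ plus the $L^q$ norm of $f$, which is exactly the statement of the lemma.

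\textbf{Main obstacle.} The genuine difficulty compared with the scalar elliptic case is keeping the pressure under control throughout the perturbation. The driving term $(A - \bar A)\nabla v$ is not divergence-free, so the pressure $p - q$ in the equation for $w$ cannot be eliminated by a direct energy pairing against $w \in H^1_0$; one needs either a Bogovski\u{\i}-type correction to split $(A-\bar A)\nabla v$ into a divergence-free part and a gradient, or a duality argument, and the smallness from the VMO modulus $\omega_1$ must survive this manoeuvre. This Stokes-specific bookkeeping is the technical heart of the proof in \cite{Shu2015Homogenization} and is what makes the adaptation of Shen's scalar machinery non-trivial here.
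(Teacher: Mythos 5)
The paper does not actually prove this lemma: it is quoted from Gu--Shen \cite{Shu2015Homogenization}, and the text simply refers the reader there. Your outline is the standard argument of that reference (freeze the coefficients on small balls, use the interior $C^1$/$L^q$ estimate for the constant-coefficient Stokes system, and assemble via Shen's real-variable $L^p$ theorem with smallness supplied by the VMO modulus), so in substance you have reconstructed the right proof. One correction, though: your ``main obstacle'' is not an obstacle. In the weak formulation for $(w,p-q)$ the pressure enters only through $\int_{2B}(p-q)\operatorname{div}\phi$, so testing with $\phi=w\in H^1_0(2B)$, which satisfies $\operatorname{div}w=\operatorname{div}u-\operatorname{div}v=0$, kills the pressure outright regardless of whether the forcing term $\operatorname{div}\bigl((A-\bar A)\nabla v\bigr)$ is divergence-free; that term is simply integrated by parts against $\nabla w$. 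No Bogovski\u{\i} correction or duality is needed for the gradient estimate --- such machinery is required only if one wants $L^q$ control of the pressure itself, which this lemma does not assert. The genuinely delicate points are rather the quantitative ones you gloss over: the H\"older splitting of $\int|A-\bar A||\nabla v||\nabla w|$ produces a power $\omega_1(cr)^{\theta}$ with $\theta=1-2/s<1$ (not $\omega_1(cr)$ itself), and the Meyers exponent $s>2$ must be fixed before $q$ enters, so the smallness threshold in the real-variable theorem is met by shrinking $r$, exactly as you indicate.
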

\subsection{Correctors estimates}
In this subsection, we give some basic estimates for the correctors $\chi_k^\alpha(y,z)$ and $\chi_k^\alpha(y)$.
\begin{lemma}
Let $(\chi_k^\beta(y),\pi_k^\beta(y))$ and $(\chi_k^\beta(y,z),\pi_k^\beta(y,z))$ be the weak solution of $(1.9)$ and $(1.7)$, respectively. Then there hold
\begin{equation}\label{2.1}\fint_Z\left|\nabla_y\pi_k^\beta(y,z)\right|^2dz+\fint_Z\left|\nabla_y\chi_k^\beta(y,z)\right|^2dz+\fint_Z\left|\nabla_z\nabla_y\chi_k^\beta(y,z)\right|^2dz\leq C_1\end{equation} and
\begin{equation}\label{2.2}||\chi_k^\beta(y)||_{W^{2,p}(Y)}+||\pi_k^\beta(y)||_{W^{1,p}(Y)}\leq C_2\end{equation} for any $p\in(1,\infty)$ and $k,\beta=1,2,\cdots,n$, where $C_1$ depends on $\mu,M$ and $n$; and $C_2$ depends on $\mu,p,M$ and $n$.
\end{lemma}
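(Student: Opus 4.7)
The plan is to prove (2.6) via energy estimates on the $z$-cell problem (1.7) combined with a difference-quotient argument in $y$, and to prove (2.7) by first showing that the effective coefficient matrix $a_{2,ij}^{\alpha\beta}(y)$ defined in (1.10) is Lipschitz in $y$, then invoking classical $W^{2,p}$ regularity for Stokes systems with continuous coefficients on the torus $Y$.

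For (2.6), I would first fix $y$ and test (1.7) with $\chi_k^\beta(y,\cdot)-P_k^\beta$; since $\operatorname{div}_z\chi_k^\beta=0$ the pressure term drops out, and the ellipticity together with $\|A\|_{L^\infty}\leq\mu^{-1}$ gives the uniform-in-$y$ bound $\fint_Z|\nabla_z\chi_k^\beta(y,z)|^2\,dz\leq C$. To gain $y$-derivatives, introduce the difference quotient
$$\delta_h^m f(y,z)=\frac{f(y+he_m,z)-f(y,z)}{h}$$
and apply it to (1.7). A direct computation shows that $(\delta_h^m\chi_k^\beta,\delta_h^m\pi_k^\beta)$ solves the Stokes system on $Z$
$$-\partial_{z_i}\bigl(a_{ij}^{\alpha\beta}(y+he_m,z)\,\partial_{z_j}\delta_h^m\chi_k^\beta\bigr)+\partial_{z_\alpha}\delta_h^m\pi_k^\beta=\partial_{z_i}\bigl(\delta_h^m a_{ij}^{\alpha\beta}(y,z)\,\partial_{z_j}(\chi_k^\beta(y,z)-P_k^\beta)\bigr),$$
with $\operatorname{div}_z\delta_h^m\chi_k^\beta=0$ and zero mean. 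The smoothness condition (1.5) gives $|\delta_h^m a_{ij}^{\alpha\beta}|\leq M$, so the $H^{-1}(Z)$ norm of the right-hand side is bounded by $M\|\nabla_z\chi_k^\beta(y,\cdot)-\nabla_z P_k^\beta\|_{L^2(Z)}\leq C$ uniformly in $h$ and $y$. The Stokes energy estimate (inf--sup on the torus) then gives $\|\nabla_z\delta_h^m\chi_k^\beta\|_{L^2(Z)}+\|\delta_h^m\pi_k^\beta\|_{L^2(Z)}\leq C$, and the Poincar\'e inequality on $Z$ yields $\|\delta_h^m\chi_k^\beta\|_{L^2(Z)}\leq C$. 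Passing to the limit $h\to 0$ via weak compactness in $L^2(Z)$ delivers (2.6).

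For (2.7), I would first observe that the effective coefficients (1.10) are Lipschitz in $y$. Differentiating under the average and applying the Cauchy--Schwarz inequality together with (2.6) gives
$$|\partial_{y_m}a_{2,ij}^{\alpha\beta}(y)|\leq M+M\Bigl(\fint_Z|\nabla_z\chi_j^{\gamma\beta}(y,z)|^2dz\Bigr)^{1/2}+\mu^{-1}\Bigl(\fint_Z|\nabla_y\nabla_z\chi_j^{\gamma\beta}(y,z)|^2dz\Bigr)^{1/2}\leq C,$$
so the intermediate operator $\mathcal{A}_2$ has Lipschitz, and in particular continuous, coefficients on $Y$. The cell problem (1.9) then becomes a Stokes system on the torus $Y$ whose forcing $-\partial_{y_i}a_{2,ik}^{\alpha\beta}(y)$ lies in $L^\infty(Y)\subset L^p(Y)$ for every $p\in(1,\infty)$, and the classical $W^{2,p}$ regularity theory for Stokes systems with continuous (in fact Lipschitz) coefficients yields $\|\chi_k^\beta(y)\|_{W^{2,p}(Y)}+\|\pi_k^\beta(y)\|_{W^{1,p}(Y)}\leq C_2$.

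The main technical point is ensuring that the bounds in (2.6) are uniform in $y$. The smoothness condition (1.5) provides only Lipschitz regularity of $A$ in $y$ (no regularity in $z$), but this is precisely what the difference-quotient argument exploits: the uniform bound $|\delta_h^m A|\leq M$ on $Z$ is what produces $y$-independent constants. A secondary point is that the standard $W^{2,p}$ Stokes regularity on the torus requires at least continuous coefficients, which is furnished automatically by (2.6); once this is verified, invoking the appropriate reference completes the argument.
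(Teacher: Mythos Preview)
Your proposal is correct and follows essentially the same approach as the paper: for (2.6) the paper also writes the Stokes system satisfied by the difference $\chi_k^\beta(y_1,\cdot)-\chi_k^\beta(y_2,\cdot)$ and applies the energy estimate (Lemma 2.2) together with the Lipschitz bound (1.5) and Poincar\'e, which is equivalent to your difference-quotient formulation; for (2.7) the paper likewise bounds $\nabla_y a_{2,ij}^{\alpha\beta}$ using (2.6) and then invokes Stokes regularity. The only cosmetic difference is that for (2.7) the paper bootstraps via the interior $W^{1,p}$ estimate of Lemma 2.3 (first to get $\chi_k^\beta\in W^{1,p}(Y)$, then after differentiating (1.9) in $y$ to get $W^{2,p}$), whereas you invoke a black-box $W^{2,p}$ Stokes result directly.
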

\begin{proof}The proof is standard. Firstly, testing the equation $(1.7)$ with $\chi_k^\beta(y,z)$ gives that
\begin{equation}\label{2.3}||\chi_k^\beta(y,\cdot)||_{W^{1,2}(Z)}\leq C.\end{equation}
Then for any $y_1,y_2$, there holds
\begin{equation}\left\{\begin{aligned}&-\partial_{z_i}\left(a_{ij}^{\alpha\gamma}(y_2,z)\partial_{z_j}(\chi_k^{\gamma\beta}(y_1,z)-\chi_k^{\gamma\beta}(y_2,z))\right)+
\nabla_{z_\alpha}(\pi_k^\beta(y_1,z)-\pi_k^\beta(y_2,z))\\
=&\partial_{z_i}\left((a_{ij}^{\alpha\gamma}(y_1,z)-a_{ij}^{\alpha\gamma}(y_2,z))\partial_{z_j}(\chi_k^\beta(y_1,z)-P_k^\beta(z))^\gamma\right),\\
&\operatorname{div}_z(\chi_k^\beta(y_1,z)-\chi_k^\beta(y_2,z))=0,
\end{aligned}\right.\end{equation} then, according to Lemma 2.2, we have
\begin{equation}\begin{aligned}&\fint_Z\left|\nabla_z(\chi_k^{\beta}(y_1,z)-\chi_k^{\beta}(y_2,z))\right|^2dz+
\fint_Z\left|\pi_k^\beta(y_1,z)-\pi_k^\beta(y_2,z)\right|^2dz\\
\leq &C\fint_Z|A(y_1,z)-A(y_2,z)|^2(1+|\nabla_z(\chi_k^\beta(y_1,z)|^2)dz\\
\leq &C|y_1-y_2|^2,
\end{aligned}\end{equation}
due to $(\ref{1.3})$ and $(\ref{2.3})$, thus this together with Poinc\'{a}re inequality will give the state estimate $(\ref{2.1})$.

Note that
$$\begin{aligned}|\nabla_ya_{2,ij}^{\alpha\beta}(y)|&\leq\fint_Z\left(|\nabla_yA(y,z)|+|\nabla_yA(y,z)|| \nabla_{z} \chi_{j}^{\beta}(y,z)|+A(y,z) |\nabla_y\nabla_{z} \chi_{j}^{\beta}(y,z)|\right)dz\\
&\leq C,\end{aligned}$$ with $a_{2,ij}^{\alpha\beta}$ defined in $(1.10)$,
where we have used $(\ref{1.3})$, $(\ref{2.1})$ and$(\ref{2.3})$. Then $(1.9)$ and Lemma 2.3 as well as $\chi_k^\beta(y)$ is Y-periodic yields
$||\chi_k^\beta(y)||_{W^{1,p}(Y)}\leq C.$ Note that $\fint_Y\pi_k^\beta(y)dy=0$, then $||\pi_k^\beta(y)||_{L^p(Y)}\leq C$ follows from the first line of $(1.9)$ and $||\nabla_y \chi_k^\beta(y)||_{L^p(Y)}\leq C$.
To complete the proof of $(2.7)$, we just need to take the derivative of $y$ with respect to the equation $(1.9)$, and we can obtain the desired estimate $(2.7)$ according to Lemma $(2.3)$ again.
\end{proof}

\begin{lemma}(reverse H\"{o}lder inequality). Let $(\chi_k^\beta(y,z),\pi_k^\beta(y,z))$ be the weak solution to $(1.7)$, then there exists a constant $\tau>0$ which depends on $\mu$ and $n$, such that for any $y$, there holds
\begin{equation}\label{2.7}
||\nabla_z \chi_k^\beta(y,z)||_{L^{2+\tau}(Z)}+||\nabla_z \pi_k^\beta(y,z)||_{L^{2+\tau}(Z)}\leq C,
\end{equation} where $C$ depends on $\mu$ and $n$.
\end{lemma}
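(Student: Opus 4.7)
The plan is to establish a Meyers-type reverse H\"older inequality by combining a Stokes--Caccioppoli estimate with Gehring's lemma. Since $Z=(0,1)^n$ is a torus and $(\chi_k^\beta(y,\cdot),\pi_k^\beta(y,\cdot))$ extends periodically to $\mathbb{R}^n$, it suffices to prove, for some $\tau>0$ depending only on $\mu$ and $n$, an interior reverse H\"older estimate on an arbitrary ball $B_{2r}\subset\mathbb{R}^n$, and then cover a fundamental cell by finitely many such balls, invoking Lemma 2.4 to bound the $L^2$ norm uniformly in $y$. Throughout I fix $y$ and abbreviate $\chi=\chi_k^\beta(y,\cdot)$, $\pi=\pi_k^\beta(y,\cdot)$, $P=P_k^\beta$; the corrector equation then reads $-\operatorname{div}_z(A(y,z)\nabla_z(\chi-P))+\nabla_z\pi=0$ on $\mathbb{R}^n$.

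The first step is a Caccioppoli inequality. Choose $\eta\in C_c^\infty(B_{2r})$ with $\eta\equiv 1$ on $B_r$ and $|\nabla\eta|\leq C/r$, and set $c=\fint_{B_{2r}}\chi\,dz$. Since $\operatorname{div}_z\chi=0$, integration by parts gives $\int_{B_{2r}}\operatorname{div}_z(\eta^2(\chi-c))\,dz=0$, so the Bogovskii operator produces $w\in H_0^1(B_{2r};\mathbb{R}^n)$ with $\operatorname{div}_z w=\operatorname{div}_z(\eta^2(\chi-c))$ and $\|\nabla_z w\|_{L^2(B_{2r})}\leq (C/r)\|\chi-c\|_{L^2(B_{2r})}$. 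Then $\phi=\eta^2(\chi-c)-w$ is divergence-free, and feeding $\phi$ into the weak form of (1.7) annihilates the pressure term. Ellipticity, $|\nabla_z P|\leq 1$, the Bogovskii bound, and Young's inequality yield
$$\int_{B_r}|\nabla_z\chi|^2\,dz \leq \frac{C}{r^2}\int_{B_{2r}}|\chi-c|^2\,dz + C|B_{2r}|.$$

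Applying Sobolev--Poincar\'e with exponent $2_\ast=2n/(n+2)<2$ to $\chi-c$ upgrades the right-hand side and produces the sub-quadratic reverse H\"older inequality
$$\left(\fint_{B_r}|\nabla_z\chi|^2\,dz\right)^{1/2}\leq C\left(\fint_{B_{2r}}|\nabla_z\chi|^{2_\ast}\,dz\right)^{1/2_\ast}+C.$$
Gehring's lemma then upgrades this to an $L^{2+\tau}$ bound with $\tau=\tau(\mu,n)>0$, proving the estimate for $\nabla_z\chi$. For the pressure, once $\nabla_z\chi\in L^{2+\tau}(Z)$ the identity $\nabla_z\pi=\operatorname{div}_z(A\nabla_z(\chi-P))$ combined with the $L^p$-estimate $\|\pi-\fint_Z\pi\|_{L^p(Z)}\leq C\|\nabla_z\pi\|_{W^{-1,p}(Z)}$ on the torus gives the corresponding $L^{2+\tau}$ bound for $\pi$; the same local Caccioppoli--Gehring machinery can then be run on the pair $(\chi,\pi)$ simultaneously to upgrade the pressure estimate to the level stated in the lemma.

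The main obstacle is the Bogovskii step inside the Caccioppoli estimate: the Stokes system compels us to test against a divergence-free function to eliminate the pressure, and one must verify both the compatibility condition $\int_{B_{2r}}\operatorname{div}_z(\eta^2(\chi-c))\,dz=0$ and, more importantly, the precise scaling $\|\nabla_z w\|_{L^2}\leq(C/r)\|\chi-c\|_{L^2}$ of the Bogovskii solution so that Young's inequality absorption goes through with constants depending only on $\mu$ and $n$. All remaining ingredients---Sobolev--Poincar\'e, Gehring's self-improvement, and the torus pressure estimate---are standard once this is in place.
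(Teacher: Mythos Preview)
Your approach is essentially the same as the paper's: Caccioppoli inequality $\Rightarrow$ Sobolev--Poincar\'e $\Rightarrow$ Gehring self-improvement $\Rightarrow$ covering by periodicity, with the pressure bound deduced afterward from the equation. The paper simply invokes ``Caccioppoli's inequality'' for the Stokes system without spelling out how the pressure is eliminated, whereas you make the Bogovski\u{\i} correction explicit; this is a welcome clarification and the scaling $\|\nabla_z w\|_{L^2(B_{2r})}\le (C/r)\|\chi-c\|_{L^2(B_{2r})}$ you need follows from $\operatorname{div}_z(\eta^2(\chi-c))=2\eta\nabla\eta\cdot(\chi-c)$ and scale-invariance of the Bogovski\u{\i} operator.

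One caution: the statement as written asks for $\|\nabla_z\pi_k^\beta\|_{L^{2+\tau}(Z)}$, but the paper's own proof in fact only establishes $\|\pi_k^\beta\|_{L^{2+\tau}(Z)}$ (this is what is obtained from $\nabla_z\pi=\operatorname{div}_z(A\nabla_z(\chi-P))$ and the Ne\v{c}as/torus pressure inequality, and it is all that is used later, e.g.\ in (3.26)). Your first pressure argument matches this exactly. Your final sentence about ``running the Caccioppoli--Gehring machinery on the pair $(\chi,\pi)$ simultaneously to upgrade the pressure estimate'' to a bound on $\nabla_z\pi$ is not justified: without regularity of $A(y,\cdot)$ in $z$ one cannot expect $\nabla_z\pi\in L^{2+\tau}$, and the paper neither proves nor needs it. Drop that last remark and your argument is complete and aligned with the paper.
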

\begin{proof} Recall that $\fint_Z\pi_k^\beta(y,z)dz=0$, then
 $||\pi_k^\beta(y,z)||_{L^{2+\tau}(Z)}\leq C$ follows form  the first line of $(1.7)$ and  $||\nabla_z\chi_k^\beta(y,z)||_{L^{2+\tau}(Z)}\leq C$. Consequently, we need only to prove that $||\nabla_z \chi_k^\beta(y,z)||_{L^{2+\tau}(Z)}\leq C.$
Then for any $z_0\in Z$, the Caccioppoli's inequality gives that
\begin{equation}
\int_{B}\left|\nabla_z \chi_{k}^{\beta}(y,z)\right|^{2} d z \leq \frac{C}{r^{2}}\left\{\int_{2 B }\left|\chi_{k}^{\beta}(y,z)-c\right|^{2} d z+r^{n+2}\right\},
\end{equation}where $B=B(z_0,r)$, and for any $c\in\mathbb{R}^n$.
Then, choose $c=\fint_{2B}\chi_k^\beta(y,z)dz$ and the Sobolev-Poinc\'{a}re inequality leads to
\begin{equation}\label{2.9}\fint_{B}|\nabla_z \chi_k^\beta(y,z)|^2dz\leq C\left(\fint_{2B}|\nabla_z \chi_k^\beta(y,z)|^{\frac{2n}{2+n}}dz\right)^{\frac{2+n}{n}}+C.\end{equation}
Using the reverse inequality (see \cite[Chapter V, Theorem 1.2]{Giaquinta1983Multiply}), we could obtain higher integrability, and there exists a $\tau>0$,
depending on $\mu,n$ such that
\begin{equation}\label{2.10}\fint_{B}|\nabla_z \chi_k^\beta(y,z)|^{2+\tau}dz\leq C\left(\fint_{2B}|\nabla_z \chi_k^\beta(y,z)|^2dz\right)^{\frac{2+\tau}{2}}+C.\end{equation}

 Consequently, a covering argument will lead to the desired estimate $(2.11)$ due to $||\chi_k^\beta(y,z)||_{W^{1,2}(Z)}\leq C$ and $\chi_k^\beta(y,z)$ is Z-periodic.
\end{proof}
In the following three lemmas, we introduce three flux correctors which will be useful for obtaining the convergence rates.
\begin{lemma}Let
\begin{equation}\begin{aligned}I_{1,ij}^{\alpha\beta}(y,z)\triangleq&-a_{ij}^{\alpha\beta}(y,z)+a_{ik}^{\alpha\gamma}(y,z)\partial_{z_k}\chi_j^{\gamma\beta}(y,z)\\
&+\fint_Z\left(a_{ij}(y,z)-a_{ik}^{\alpha\gamma}(y,z)\partial_{z_k}\chi_j^{\gamma\beta}(y,z)\right)dz,\end{aligned}\end{equation}
where $y\in Y$ and $z\in Z$. Then there hold: $(i)$ $\fint_ZI_{1,ij}^{\alpha\beta}(y,\cdot)dz=0$; $(ii)$ $\partial_{z_i}I_{1,ij}^{\alpha\beta}=\nabla_{z_\alpha}\pi_j^\beta(y,z)$. Moreover,
there exist  $E_{1,kij}^{\alpha\beta}(y,\cdot)\in H^1_{\text{per}}(Z)$ and $q_{1,ik}^\beta(y,\cdot)\in H^1_{\text{per}}(Z)$ such that
\begin{equation}\label{2.12}
I_{1,ij}^{\alpha\beta}(y,z)=\partial_{z_k}E_{1,kij}^{\alpha\beta}(y,z)+\partial_{z_\alpha}q_{1,ij}^\beta(y,z) \ ,\  E_{1,kij}^{\alpha\beta}=-E_{1,ikj}^{\alpha\beta}\text{ and }\partial_{z_i}q_{1,ik}^\alpha=\pi_k^\alpha(y,z),
\end{equation}
and there hold  the following estimates
\begin{equation}\label{2.13}\fint_Z|E_{1,kij}^{\alpha\beta}(y',z)-E_{1,kij}^{\alpha\beta}(y,z)|^2dz+\fint_Z|\nabla_z(E_{1,kij}^{\alpha\beta}(y',z)-E_{1,kij}^{\alpha\beta}(y,z))|^2dz\leq C|y-y'|^2\end{equation}
for any $k,i,j,\alpha,\beta=1,\cdots,n$, where $C$ depends on $\mu,M$ and $n$.\end{lemma}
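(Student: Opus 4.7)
The plan is to verify identities (i) and (ii) directly, then build $q_{1,ij}^\beta$ and $E_{1,kij}^{\alpha\beta}$ via two successive periodic Poisson problems on $Z$, and finally derive the Lipschitz-in-$y$ estimate by propagating the Lipschitz bounds from Lemma 2.4 through those elliptic problems. Claim (i) is immediate from the definition, since the third term of $I_{1,ij}^{\alpha\beta}$ is $z$-independent and exactly cancels the $z$-mean of the first two. For claim (ii), I differentiate $I_{1,ij}^{\alpha\beta}$ in $z_i$; the mean-correction drops, and expanding $\mathcal{A}_1(\chi_j^\beta-P_j^\beta)$ in the first line of (1.7) yields $\partial_{z_i}\bigl(a_{ij}^{\alpha\beta}-a_{ik}^{\alpha\gamma}\partial_{z_k}\chi_j^{\gamma\beta}\bigr)=-\partial_{z_\alpha}\pi_j^\beta$, so $\partial_{z_i}I_{1,ij}^{\alpha\beta}=\partial_{z_\alpha}\pi_j^\beta$.

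To produce $q_{1,ik}^\beta$, I introduce for each $(k,\beta)$ an auxiliary periodic potential $\phi_k^\beta(y,\cdot)\in H^2_{\text{per}}(Z)$ solving $\Delta_z\phi_k^\beta=\pi_k^\beta$, which is well posed since $\fint_Z\pi_k^\beta=0$, and I set $q_{1,ik}^\beta:=\partial_{z_i}\phi_k^\beta$. Then $q_{1,ik}^\beta$ is $Z$-periodic and $\partial_{z_i}q_{1,ik}^\beta=\Delta_z\phi_k^\beta=\pi_k^\beta$. Next, define $B_{ij}^{\alpha\beta}:=I_{1,ij}^{\alpha\beta}-\partial_{z_\alpha}q_{1,ij}^\beta$. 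Using (i), (ii), and $\partial_{z_i}q_{1,ij}^\beta=\pi_j^\beta$, I check that $B_{ij}^{\alpha\beta}$ is $Z$-periodic, mean-zero, and satisfies $\partial_{z_i}B_{ij}^{\alpha\beta}=0$ for each fixed $(j,\alpha,\beta)$. To build the antisymmetric tensor, let $\psi_{i,j}^{\alpha\beta}(y,\cdot)\in H^2_{\text{per}}(Z)$ solve $\Delta_z\psi_{i,j}^{\alpha\beta}=B_{ij}^{\alpha\beta}$ with zero mean; the divergence-free condition forces $\partial_{z_i}\psi_{i,j}^{\alpha\beta}$ to solve the zero-mean periodic Laplace equation, hence to vanish. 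Setting $E_{1,kij}^{\alpha\beta}:=\partial_{z_k}\psi_{i,j}^{\alpha\beta}-\partial_{z_i}\psi_{k,j}^{\alpha\beta}$ then gives the antisymmetry $E_{1,kij}^{\alpha\beta}=-E_{1,ikj}^{\alpha\beta}$ and $\partial_{z_k}E_{1,kij}^{\alpha\beta}=\Delta_z\psi_{i,j}^{\alpha\beta}-\partial_{z_i}(\partial_{z_k}\psi_{k,j}^{\alpha\beta})=B_{ij}^{\alpha\beta}$, yielding the decomposition (2.13).

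For the estimate (2.14), I subtract the defining equations at $y$ and $y'$ and apply standard $H^2$ regularity for the periodic Laplacian, which reduces the task to controlling $\|B_{ij}^{\alpha\beta}(y',\cdot)-B_{ij}^{\alpha\beta}(y,\cdot)\|_{L^2(Z)}$ by $C|y-y'|$. This bound follows from the Lipschitz condition (1.5) on $A$, the Lipschitz-in-$y$ control of $\nabla_z\chi_j^\beta$ and $\pi_j^\beta$ (already obtained in the course of proving (2.6) in Lemma 2.4), and the analogous estimate for $\partial_{z_\alpha}q_{1,ij}^\beta$ obtained by applying $H^2$-regularity to the $y$-difference of the $\phi$-problem. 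The main (mild) obstacle is the bookkeeping: one has to verify that the two successive elliptic steps preserve the Lipschitz modulus linearly and that the antisymmetric and divergence-free structures remain intact under $y$-differencing. Both are guaranteed by the linearity of the Poisson problems in the data, so the argument is essentially a careful tracking of $y$-dependence through the construction.
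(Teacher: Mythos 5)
Your argument is correct, and it reaches the decomposition by a route that differs from the paper's in one structural respect. The paper constructs $E_{1,kij}^{\alpha\beta}$ and $q_{1,ij}^\beta$ simultaneously from a single periodic Stokes cell problem $\Delta_z f_{1,ij}^{\gamma}+\nabla_z q_{1,ij}^{\gamma}=I_{1,ij}^{\gamma}$, $\operatorname{div}_z f_{1,ij}^{\gamma}=0$, whose solvability it imports from Lemma 2.2; it then sets $E_{1,kij}^{\alpha\beta}=\partial_{z_k}f_{1,ij}^{\alpha\beta}-\partial_{z_i}f_{1,kj}^{\alpha\beta}$ and must work to show $\partial_{z_i}\partial_{z_k}f_{1,kj}^{\alpha\beta}=0$ and to identify $\partial_{z_i}q_{1,ik}^\alpha$ with $\pi_k^\alpha$ up to a constant, both of which require a second pass through the Stokes system (the system (2.19) for $\partial_{z_i}f_{1,ik}^{\alpha\gamma}$). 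You instead decouple the pressure first: you take $q_{1,ik}^\beta=\partial_{z_i}\phi_k^\beta$ with $\Delta_z\phi_k^\beta=\pi_k^\beta$, so that $\partial_{z_i}q_{1,ik}^\beta=\pi_k^\beta$ holds by construction rather than being deduced a posteriori, and the residual $B_{ij}^{\alpha\beta}=I_{1,ij}^{\alpha\beta}-\partial_{z_\alpha}q_{1,ij}^\beta$ is exactly divergence-free in $i$ by property (ii); the remaining step is then a scalar periodic Poisson problem, and the vanishing of $\partial_{z_i}\psi_{i,j}^{\alpha\beta}$ follows from harmonicity plus periodicity, which is the clean analogue of the paper's constancy argument. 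What your approach buys is that it avoids invoking Stokes solvability and uniqueness (only the periodic Laplacian is needed), and the identity $\partial_{z_i}q_{1,ik}^\alpha=\pi_k^\alpha$ comes for free; what the paper's approach buys is a single auxiliary system whose $H^2$ theory immediately yields all the bounds at once. Your treatment of the Lipschitz estimate is also sound: the only extra ingredient relative to the paper is the $H^2$ bound for the $y$-difference of the $\phi$-problem, which is controlled by $\fint_Z|\pi_k^\beta(y,z)-\pi_k^\beta(y',z)|^2dz\leq C|y-y'|^2$ from the proof of Lemma 2.4, so the reduction to $\|I_{1,ij}(y,\cdot)-I_{1,ij}(y',\cdot)\|_{L^2(Z)}\leq C|y-y'|$ goes through exactly as in the paper.
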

\begin{proof}The $(i)$ and $(ii)$ follow from the definition $(2.15)$ and $(1.7)$, respectively.
Let $I_{1,ij}^{\gamma}(y,z)=(I_{1,ij}^{1\gamma}(y,z),\cdots,I_{1,ij}^{n\beta}(y,z))$, and we construct the auxiliary cell problem as follows
\begin{equation}
\left\{\begin{aligned} \Delta_z f_{1,i k}^{\gamma}(y,z)+\nabla_z q_{1,i k}^{\gamma}(y,z) &=I_{1,i k}^{\gamma}(y,z) \quad \text { in } Z \\ \operatorname{div}\left(f_{1,i k}^{\gamma}\right) &=0 \quad\quad\quad\quad\  \text { in } Z \\ \int_{Z} f_{1,i k}^{\gamma}(y,z) d z =0, \int_Z q_{1,i k}^{\gamma}(y,z) dz&=0, \text { and } f_{1,i k}^{\gamma}(y,z), q_{1,i k}^{\gamma}(y,z)\quad \text { are Z-periodic, } \end{aligned}\right.
\end{equation} The existence of the solution $(f_{1,ik}^\gamma,q_{1,ik}^\gamma)\in H_{\text{per}}^2(Z;\mathbb{R}^n)\times H^1_{\text{per}}(Z)$ to the equation $(2.18)$ is based upon the property (i), $I_{1,ij}^{\alpha\gamma}(y,\cdot)\in L^2(Z)$ and Lemma 2.2.
Set $E_{1,kij}^{\alpha\gamma}(y,\cdot)=\partial_{z_k}f_{1,ij}^{\alpha\gamma}(y,\cdot)-\partial_{z_i}f_{1,kj}^{\alpha\gamma}(y,\cdot)$, then
$E_{1,kij}^{\alpha\gamma}=-E_{1,ikj}^{\alpha\gamma}$ is clear, and we find $$\partial_{z_k}E_{1,kij}^{\alpha\gamma}(y,z)=\Delta_zf_{1,ij}^{\alpha\gamma}(y,z)-\partial_{z_i}\partial_{z_k}f_{1,kj}^{\alpha\gamma}(y,z)=
I_{1,ij}^{\alpha\gamma}(y,z)-\partial_{z_\alpha}q_{1,ij}^{\alpha\gamma}(y,z)-\partial_{z_i}\partial_{z_k}f_{1,kj}^{\alpha\gamma}(y,z).$$
Consequently, it only needs to prove $\partial_{z_i}\partial_{z_k}f_{1,kj}^{\alpha\gamma}(y,z)=0$.
In view of $(2.18)$ and the property $(ii)$, we have
\begin{equation}
\left\{\begin{aligned} \Delta_z\left(\frac{\partial f_{1,i k}^{\alpha \gamma}}{\partial z_{i}}\right)+\nabla_{z_\alpha}\left(\frac{\partial q_{1,i k}^{\gamma}}{\partial z_{i}}\right)=\frac{\partial I_{1,i k}^{\alpha \gamma}}{\partial z_{i}} &=\nabla_{\alpha} \pi_{k}^{\gamma} \quad \text { in } Z \\ \nabla_{\alpha}\left(\frac{\partial f_{1,i k}^{\alpha \gamma}}{\partial z_{i}}\right)&=0  \text { in } Z \end{aligned}\right.
\end{equation}

This implies $\partial_{z_i}f_{1,ij}^{\alpha\gamma}$ is a constant, (taking $\partial_{z_i}f_{1,ij}^{\alpha\gamma}$ as a test function and integrating by
parts, it is not hard to derive $\int_z|\nabla_z(\partial_{z_i}f_{1,ij}^{\alpha\gamma})|^2dz=0$,) therefore, we have
$\partial_{z_i}\partial_{z_k}f_{1,kj}^{\alpha\beta}=0$. Also, the above equation shows the difference between $\pi_k^\alpha(y,z)$ and
$\partial_{z_i}q_{1,ik}^\alpha(y,z)$ is a constant. Consequently, $\partial_{z_i}q_{1,ik}^\alpha=\pi_k^\alpha(y,z)$ follows from the facts that
$\int_Z\pi_k^\alpha(y,z)dz=\int_Z\partial_{z_i}q_{1,ik}^\alpha(y,z)dz=0$.\\

To prove the estimate $(2.17)$, we note that
$$\begin{aligned}
\int_{Z}|\nabla_{z}E_{1,kij}(y,\cdot)-\nabla_{z}E_{1,kij}(y',\cdot)|^2dz&\leq \int_{Z}|\nabla^2_{z}(f_{1,ij}(y,\cdot)-f_{1,ij}(y',\cdot))|^2dz\\
&\leq C\int_{2Z}|I_{1,ij}(y,z)-I_{1,ij}(y',z)|^2dz\\
&\leq C|y-y'|^2,
\end{aligned}$$ where we have used $(\ref{1.3})$ and $(2.10)$ in the last inequality. Consequently,
the estimate above together with Poincar\'{e}' inequality completes the proof of $(\ref{2.13})$.

\end{proof}

\begin{lemma}Let
\begin{equation}\label{2.14}\begin{aligned}
I_{2,ij}^{\alpha\beta}(y)\triangleq\widehat{a}_{ij}^{\alpha\beta}&+\fint_Z\left(a_{ik}^{\alpha\gamma}(y,z)\partial_{y_k}\chi_j^{\gamma\beta}(y)
-a_{ik}^{\alpha\gamma}(y,z)\partial_{z_k}\chi_l^{\gamma\eta}(y,z)\partial_{y_l}\chi_j^{\eta\beta}(y)\right)dz\\
&-\fint_Z\left(a_{ij}^{\alpha\beta}(y,z)-a_{ik}^{\alpha\gamma}(y,z)\partial_{z_k}\chi_j^{\gamma\beta}(y,z)\right)dz,
\end{aligned}\end{equation}\end{lemma}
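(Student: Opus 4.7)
The plan is to transcribe the construction of Lemma 2.6 to the slow cell $Y$, with the homogenized operator $\mathcal{A}_2$ playing the role of $\mathcal{A}_1$. Before invoking an auxiliary Stokes problem, I first establish the two structural identities (i) $\fint_Y I_{2,ij}^{\alpha\beta}(y)\,dy=0$ and (ii) $\partial_{y_i} I_{2,ij}^{\alpha\beta}(y) = \partial_{y_\alpha}\pi_j^{\beta}(y)$. Both stem from the following rewriting of (2.20): relabeling the dummy index in the second $\fint_Z$-term, factoring $\partial_{y_l}\chi_j^{\eta\beta}(y)$ out of the $z$-integral, and using the definition (1.10) of $a_{2,il}^{\alpha\eta}(y)$ yields
\[
I_{2,ij}^{\alpha\beta}(y) = \hat{a}_{ij}^{\alpha\beta} + a_{2,il}^{\alpha\eta}(y)\,\partial_{y_l}\chi_j^{\eta\beta}(y) - a_{2,ij}^{\alpha\beta}(y).
\]
For (i), I take $\fint_Y$ of this identity and match the four resulting pieces against the four-term formula (1.12) for $\hat{a}_{ij}^{\alpha\beta}$; the $y$-averages cancel. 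For (ii), since $\hat{a}_{ij}^{\alpha\beta}$ is constant, the $\alpha$-th component of (1.9) for $(\chi_j^\beta(y),\pi_j^\beta(y))$, combined with $P_j^\beta = y_j e^\beta$, directly gives $\partial_{y_i}[a_{2,il}^{\alpha\eta}(y)\partial_{y_l}\chi_j^{\eta\beta}(y) - a_{2,ij}^{\alpha\beta}(y)] = \partial_{y_\alpha}\pi_j^\beta(y)$.

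Next, for each fixed triple $(i,j,\beta)$ I solve the periodic Stokes cell problem on $Y$,
\[
-\Delta_y f_{2,ij}^{\beta}(y) + \nabla_y q_{2,ij}^{\beta}(y) = I_{2,ij}^{\,\cdot\,\beta}(y),\qquad \operatorname{div}_y f_{2,ij}^{\beta}(y) = 0,
\]
with zero mean; solvability rests on (i) together with Lemma 2.2. I then define $E_{2,kij}^{\alpha\beta}(y) := \partial_{y_k} f_{2,ij}^{\alpha\beta}(y) - \partial_{y_i} f_{2,kj}^{\alpha\beta}(y)$, so that the skew-symmetry $E_{2,kij}^{\alpha\beta} = -E_{2,ikj}^{\alpha\beta}$ is built in. The key identity $\partial_{y_k} E_{2,kij}^{\alpha\beta}(y) = I_{2,ij}^{\alpha\beta}(y) - \partial_{y_\alpha}q_{2,ij}^\beta(y)$ reduces, as in Lemma 2.6, to showing $\partial_{y_i}\partial_{y_k}f_{2,kj}^{\alpha\beta} \equiv 0$. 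Differentiating the auxiliary Stokes system in $y_i$ and using (ii) to identify the right-hand side as $\partial_{y_\alpha}\pi_j^\beta$, an energy identity forces $\partial_{y_i}f_{2,ij}^{\alpha\beta}$ to be a constant, hence zero by the zero-mean normalization. The same manipulation then yields $\partial_{y_i}q_{2,ij}^\alpha = \pi_j^\alpha(y)$, exploiting $\fint_Y \pi_j^\alpha(y)\,dy = 0$.

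For the quantitative bound, I expect uniform $W^{1,p}(Y)$ estimates on $E_{2,kij}^{\alpha\beta}$ and $q_{2,ij}^\beta$ for every $p\in(1,\infty)$. These follow by applying periodic $W^{2,p}$ Stokes theory (in the spirit of Lemma 2.3) to the auxiliary cell problem, once one notes that $I_{2,ij}^{\alpha\beta}\in L^p(Y)$ uniformly: the effective coefficients $a_{2,il}^{\alpha\eta}(y)$ are bounded thanks to the $L^{2+\tau}$-integrability of $\nabla_z\chi$ from Lemma 2.6 together with Lemma 2.5, and the factor $\partial_{y_l}\chi_j^{\eta\beta}(y)$ is $L^p$-bounded by Lemma 2.5. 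The main obstacle will be the index bookkeeping in step (i): verifying that the four terms in (1.12) exactly cancel against the four pieces produced by averaging the rewriting of $I_{2,ij}^{\alpha\beta}$ over $Y$. Once that cancellation is in place, the remainder is a direct transcription of Lemma 2.6 to the slow cell.
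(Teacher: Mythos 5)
Your proposal is correct and follows essentially the same route as the paper, whose entire proof of this lemma is the remark that it is ``totally similar to Lemma 2.6'': you transcribe that construction to the cell $Y$, with the auxiliary periodic Stokes problem, the antisymmetrized gradient $E_{2,kij}^{\alpha\beta}=\partial_{y_k}f_{2,ij}^{\alpha\beta}-\partial_{y_i}f_{2,kj}^{\alpha\beta}$, and the energy argument showing $\partial_{y_i}f_{2,ij}^{\alpha\beta}$ is constant. The only (harmless) deviation is that you \emph{prove} $\fint_Y I_{2,ij}^{\alpha\beta}\,dy=0$ by matching against $(1.12)$, whereas the paper states it as an assumption in the lemma and only verifies it later in the proof of Lemma 3.2; your verification agrees with that one.
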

where $y\in Y$, we assume that $\fint_YI_{2,ij}^{\alpha\beta}(y)dy=0$ in addition. Then  $\partial_{y_i}I_{2,ij}^{\alpha\beta}=\nabla_{y_\alpha}\pi_j^\beta(y)$.
Moreover, there exists  $E_{2,kij}^{\alpha\beta}(y)\in H^1_{\text{per}}(Y)$ and $q_{2,ik}^\beta(y)\in H^1_{\text{per}}(Y)$ such that
\begin{equation}
I_{2,ij}^{\alpha\beta}(y)=\partial_{y_k}E^{\alpha\beta}_{2,kij}(y)+\partial_{y_\alpha}q_{2,ik}^\beta(y) \ ,\  E_{2,kij}^{\alpha\beta}=-E_{2,ikj}^{\alpha\beta}\text{ and }\partial_{y_i}q_{2,ik}^\alpha(y)=\pi_k^\alpha(y),
\end{equation}
and the estimate \begin{equation}\label{2.16}
||E^{\alpha\beta}_{2,kij}||_{H^1(Y)}\leq C,
\end{equation}
for any $k,i,j,\alpha,\beta=1,\cdots,n$, where $C$ depends on $\mu,M$ and $n$.
\begin{proof}The proof is totally similarly to Lemma 2.6.
\end{proof}

\begin{lemma}Let
\begin{equation}\label{2.17}\begin{aligned}
I_{3,ij}^{\alpha\beta}(y,z)\triangleq&a_{ik}^{\alpha\gamma}(y,z)\partial_{y_k}\chi_j^{\gamma\beta}(y)-a_{ik}^{\alpha\eta}(y,z)\partial_{z_k}\chi_l^{\eta\gamma}(y,z)\partial_{y_l}\chi_j^{\gamma\beta}(y)\\
&-\fint_Z\left(a_{ik}^{\alpha\gamma}(y,z)\partial_{y_k}\chi_j^{\gamma\beta}(y)-a_{ik}^{\alpha\gamma}(y,z)\partial_{z_k}\chi_l^{\gamma\eta}(y,z)\partial_{y_l}\chi_j^{\eta\beta}(y)\right)dz,
\end{aligned}\end{equation}
where $y\in Y$ and $z\in Z$. Then there hold: $(i)$ $\fint_ZI_{3,ij}^{\alpha\beta}(y,\cdot)dz=0$; $(ii)$ $\partial_{z_i}I_{3,ij}^{\alpha\beta}(y,z)=-\partial_{z_\alpha}(\pi_k^\gamma(y,z)\partial_{y_k}\chi_j^{\gamma\beta}(y))$.
Moreover, there exists  $E_{3,kij}^{\alpha\beta}(y,\cdot)\in H^1_{\text{per}}(Z)$ and $q_{3,ik}^\beta(y,\cdot)\in H^1_{\text{per}}(Z)$ such that
\begin{equation}\begin{aligned}
I_{3,ij}^{\alpha\beta}(y,z)&=\partial_{z_k}E^{\alpha\beta}_{3,kij}(y,z)-\partial_{z_\alpha}q_{3,ik}^\beta(y,z), \\ E^{\alpha\beta}_{3,kij}&=-E_{3,ikj}^{\alpha\beta}\text{ and }\partial_{z_i}q_{3,ik}^\alpha(y,z)=-\pi_j^\gamma(y,z)\partial_{y_j}\chi_k^{\gamma\alpha}(y).
\end{aligned}\end{equation}
for any $i,j,\alpha,\beta=1,\cdots,n$.\end{lemma}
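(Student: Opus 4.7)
The plan is to mirror the argument of Lemma 2.6 for $I_1$ essentially verbatim, since the structural setup is identical: $I_{3,ij}^{\alpha\beta}$ has been normalized so as to have vanishing $z$-mean, and its $z$-divergence turns out to equal a pure $z$-gradient. Property (i) is immediate by construction, the subtracted term being exactly the $z$-average of the first two summands. For property (ii), I would differentiate the first two summands in $z_i$ (the subtracted average is $z$-constant), pull $\partial_{y_l}\chi_j^{\gamma\beta}(y)$ outside the $z$-derivative since it is $z$-independent, and rewrite the result via the strong form of the cell equation (1.7); concretely, relabelling $k\to l$, $\beta\to\gamma$ in (1.7) gives
\begin{equation*}
\partial_{z_i}\bigl[a_{il}^{\alpha\gamma}(y,z)\bigr]-\partial_{z_i}\bigl[a_{ik}^{\alpha\eta}(y,z)\partial_{z_k}\chi_l^{\eta\gamma}(y,z)\bigr]=-\partial_{z_\alpha}\pi_l^\gamma(y,z),
\end{equation*}
and multiplying by $\partial_{y_l}\chi_j^{\gamma\beta}(y)$ (which is inert under $\partial_{z_\alpha}$) yields property (ii).

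For the existence of $E_3$ and $q_3$, I would introduce the periodic auxiliary Stokes cell problem
\begin{equation*}
\left\{\begin{aligned}
\Delta_z f_{3,ij}^\gamma(y,z) + \nabla_z \tilde{q}_{3,ij}^\gamma(y,z) &= I_{3,ij}^\gamma(y,z) && \text{in } Z,\\
\operatorname{div}_z f_{3,ij}^\gamma(y,z) &= 0 && \text{in } Z,
\end{aligned}\right.
\end{equation*}
with zero $z$-means and $Z$-periodic boundary conditions; existence of a unique solution in $H^2_{\text{per}}(Z;\mathbb{R}^n)\times H^1_{\text{per}}(Z)$ follows from (i), the $L^2$-bound $I_{3,ij}^{\alpha\gamma}(y,\cdot)\in L^2(Z)$ (a consequence of Lemmas 2.4 and 2.5), and Lemma 2.2. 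Setting $E_{3,kij}^{\alpha\gamma}(y,z):=\partial_{z_k} f_{3,ij}^{\alpha\gamma}-\partial_{z_i} f_{3,kj}^{\alpha\gamma}$ and $q_{3,ij}^\gamma:=-\tilde{q}_{3,ij}^\gamma$ yields the antisymmetry $E_{3,kij}^{\alpha\gamma}=-E_{3,ikj}^{\alpha\gamma}$ for free, and expanding $\partial_{z_k}E_{3,kij}^{\alpha\gamma}$ with $\operatorname{div}_z f_{3,ij}^\gamma=0$ gives the decomposition up to the residual term $\partial_{z_i}\partial_{z_k}f_{3,kj}^{\alpha\gamma}$.

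What remains is to show $\partial_{z_i}\partial_{z_k}f_{3,kj}^{\alpha\gamma}=0$ and to identify $\partial_{z_i}q_{3,ik}^\alpha$, again exactly as in Lemma 2.6. Taking $\operatorname{div}_z$ of the auxiliary equation, combining with property (ii), and testing against $\partial_{z_i}f_{3,ij}^{\alpha\gamma}$ forces $\nabla_z(\partial_{z_i}f_{3,ij}^{\alpha\gamma})=0$; the same relation then shows that $\partial_{z_i}\tilde{q}_{3,ik}^\alpha-\pi_j^\gamma(y,z)\partial_{y_j}\chi_k^{\gamma\alpha}(y)$ is constant in $z$, which must vanish by the zero-mean conditions on $\tilde{q}_3$ and on $\pi_j^\gamma$, giving the stated trace formula. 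The only real obstacle is careful bookkeeping between the $z$-active correctors $\chi_l^{\eta\gamma}(y,z)$ and the inert $y$-active correctors $\chi_j^{\gamma\beta}(y)$; once these roles are kept straight the proof is a near-verbatim copy of Lemma 2.6, and no new quantitative $y$-estimate (such as (2.17)) is required here.
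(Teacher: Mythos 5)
Your proof is correct and follows essentially the same route as the paper, which likewise reduces Lemma 2.8 to the auxiliary Stokes cell problem of Lemma 2.6 with $E_{3,kij}^{\alpha\beta}=\partial_{z_k}f_{3,ij}^{\alpha\beta}-\partial_{z_i}f_{3,kj}^{\alpha\beta}$; you merely supply the details (verification of (i)--(ii) via (1.7), vanishing of $\partial_{z_i}\partial_{z_k}f_{3,kj}^{\alpha\beta}$, and the zero-mean identification of $\partial_{z_i}q_{3,ik}^{\alpha}$) that the paper leaves implicit. The only caveat is a sign convention: your choice $q_3=-\tilde q_3$ makes the displayed decomposition $I_3=\partial_{z_k}E_3-\partial_{z_\alpha}q_3$ come out right but then yields $\partial_{z_i}q_{3,ik}^{\alpha}=+\pi_j^{\gamma}\partial_{y_j}\chi_k^{\gamma\alpha}$ rather than the stated minus sign — an inconsistency already present between the lemma's statement and the paper's own auxiliary problem (which uses $+\nabla_z q_3$), and harmless either way.
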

\begin{proof}The $(i)$ and $(ii)$ follow from the definition $(2.23)$ and $(1.7)$, respectively. Actually, similarly to the proof of Lemma 2.6, the existences of $E_{3,kij}^{\alpha\gamma}(y,z)$ and $q_{3,ik}^\gamma(y,z)$ are given by the following the auxiliary cell problem
\begin{equation}
\left\{\begin{aligned} \Delta_z f_{3,i k}^{\gamma}(y,z)+\nabla_z q_{3,i k}^{\gamma}(y,z) &=I_{3,i k}^{\gamma}(y,z) \quad \text { in } Z \\ \operatorname{div}\left(f_{3,i k}^{\gamma}\right) &=0 \quad \text { in } Z \\ \int_{Z} f_{3,i k}^{\gamma}(y,z) d z =0, \int_Z q_{3,i k}^{\gamma}(y,z) dz&=0, \text { and } f_{3,i k}^{\gamma}(y,z),\ q_{3,i k}^{\gamma}(y,z) \text { are Z-periodic, } \end{aligned}\right.
\end{equation} with $E_{3,kij}^{\alpha\gamma}(y,\cdot)=\partial_{z_k}f_{3,ij}^{\alpha\gamma}(y,\cdot)-\partial_{z_i}f_{3,kj}^{\alpha\gamma}(y,\cdot)$.
\end{proof}
\subsection{Smoothing operator}
To deal with the convergence rates in the next section, we introduce an $\varepsilon$-smoothing operator $S_\varepsilon$ in this subsection.\\
\begin{defn} Fix a nonnegative function $\rho\in C_0^\infty(B(0,1/2))$ such that $\int_{\mathbb{R}^n}\rho dx=1$. For $\varepsilon>0,$
define \begin{equation}\label{2.19}
S_\varepsilon(f)(x)=\rho_\varepsilon\ast f(x)=\int_{\mathbb{R}^n}f(x-y)\rho_\varepsilon(y)dy,\end{equation}
where $\rho_\varepsilon(y)=\varepsilon^{-n}\rho(y/\varepsilon)$. \end{defn}
\begin{lemma} (i) If $f\in L^p(\mathbb{R}^n)$ for some $1\leq p<\infty.$ Then for any $g\in L^{p}_{\text{per}}(\mathbb{R}^n)$ ($g$ is $Y$-periodic),
\begin{equation}\label{2.20}\begin{cases}
||g(\cdot/\varepsilon)S_\varepsilon(f)||_{L^p(\mathbb{R}^n)}\leq C(p,n)||g||_{L^p(Y)}||f||_{L^p(\mathbb{R}^n)}\\
||g(\cdot/\varepsilon)\nabla S_\varepsilon(f)||_{L^p(\mathbb{R}^n)}\leq C(p,n)\varepsilon^{-1}||g||_{L^p(Y)}||f||_{L^p(\mathbb{R}^n)},
\end{cases}\end{equation} and if $0<\varepsilon\leq 1$, we have
\begin{equation}\label{2.21}
||g(\cdot/\varepsilon^2)S_\varepsilon(f)||_{L^p(\mathbb{R}^n)}\leq C(p,n)||g||_{L^p(Y)}||f||_{L^p(\mathbb{R}^n)}.
\end{equation}
(ii) If $f\in W^{1,p}(\mathbb{R}^n)$ for some $1\leq p<\infty.$ Then
\begin{equation}\label{2.22}
||S_\varepsilon(f)-f||_{L^p(\mathbb{R}^n)}\leq C(n,p)\varepsilon||\nabla f||_{L^p(\mathbb{R}^n)}.
\end{equation}
\end{lemma}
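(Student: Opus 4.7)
The plan is to reduce every bound to a combination of Jensen's inequality, Fubini, and one elementary periodicity--integrability fact: for any $Y$-periodic $g\in L^p_{\mathrm{per}}(\mathbb{R}^n)$, any $y_0\in\mathbb{R}^n$ and any $R>0$, one has $\int_{B(y_0,R)}|g|^p\,dw\leq C(1+R^n)\|g\|_{L^p(Y)}^p$, obtained by covering the ball with $O(1+R^n)$ unit translates of $Y$ and invoking periodicity on each. All three estimates in (i) then fall out of applying Jensen, Fubini, and an appropriate change of variables, with this lemma as the decisive ingredient.

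For the first inequality of (i), I would apply Jensen to the probability density $\rho_\varepsilon$ to obtain $|S_\varepsilon(f)(x)|^p\leq(\rho_\varepsilon*|f|^p)(x)$, then swap the order of integration by Fubini. The resulting inner integral $\int|g(x/\varepsilon)|^p\rho_\varepsilon(x-y)\,dx$ reduces, via the change of variables $z=(x-y)/\varepsilon$, to $\int_{B(0,1/2)}|g(z+y/\varepsilon)|^p\rho(z)\,dz$. Since $\rho$ has compact support inside a single translate of the period cell, this is bounded by $\|\rho\|_\infty\|g\|_{L^p(Y)}^p$ uniformly in $y$ and $\varepsilon$. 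The gradient estimate follows identically after writing $\nabla S_\varepsilon(f)=\varepsilon^{-1}(\nabla\rho)_\varepsilon*f$ and using the same scheme with $|\nabla\rho|/\|\nabla\rho\|_{L^1}$ in place of the probability density; the factor $\varepsilon^{-1}$ is exactly the one produced by differentiating the kernel.

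The third inequality is the genuinely new ingredient, and the only place where the two scales $\varepsilon$ and $\varepsilon^2$ have to be tracked simultaneously. After the same Jensen-and-Fubini step, the inner integral becomes $\int_{B(0,1/2)}|g(z/\varepsilon+y/\varepsilon^2)|^p\rho(z)\,dz$. The key move is an additional substitution $w=z/\varepsilon$, which turns this into $\varepsilon^n\int_{B(0,1/(2\varepsilon))}|g(w+y/\varepsilon^2)|^p\rho(\varepsilon w)\,dw$. Since $\varepsilon\leq 1$, the radius $1/(2\varepsilon)$ is at least $1/2$, so the periodicity--integrability fact bounds this last integral by $C\varepsilon^{-n}\|g\|_{L^p(Y)}^p$; the Jacobian $\varepsilon^n$ cancels it exactly, leaving the stated $\varepsilon$-uniform bound. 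This cancellation is the main obstacle, in the sense that without the additional substitution the exponents do not match; it is also the single step where the hypothesis $\varepsilon\leq 1$ is essential, because it guarantees that the ball is large enough for the covering argument to produce the right power of $\varepsilon$.

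Part (ii) is a textbook mollification error estimate: write $S_\varepsilon(f)(x)-f(x)=\int\rho_\varepsilon(y)[f(x-y)-f(x)]\,dy$ and represent the bracket as $-\int_0^1 y\cdot\nabla f(x-ty)\,dt$. Using $|y|\leq\varepsilon/2$ on $\mathrm{supp}\,\rho_\varepsilon$, apply Jensen with respect to the product probability measure $\rho_\varepsilon(y)\,dy\otimes dt$ to raise to the $p$-th power, and then Fubini together with the translation invariance of Lebesgue measure in the $x$ variable to conclude $\|S_\varepsilon(f)-f\|_{L^p}\leq\varepsilon\|\nabla f\|_{L^p}$.
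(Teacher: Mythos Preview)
Your proof is correct and follows essentially the same route as the paper's. For the first pair of estimates and for (ii) the paper simply cites Shen's book, while you spell out the standard Jensen--Fubini--mollification arguments; for the new $\varepsilon^2$ estimate both you and the paper use the identical strategy: apply Jensen/H\"older to get $|S_\varepsilon f|^p\le \rho_\varepsilon*|f|^p$, swap the integrals by Fubini, change variables so that $g$ is integrated over a ball of radius $\sim 1/\varepsilon$, and invoke periodicity to see that this integral is $O(\varepsilon^{-n})$, cancelling the Jacobian. The only cosmetic difference is that the paper does the substitution $x\mapsto x/\varepsilon^2$ in one step whereas you do it in two ($z=(x-y)/\varepsilon$ followed by $w=z/\varepsilon$); the computations are otherwise identical, and your remark that $\varepsilon\le 1$ is needed precisely so that the ball is large enough for the covering bound $\int_{B(0,1/(2\varepsilon))}|g|^p\le C\varepsilon^{-n}\|g\|_{L^p(Y)}^p$ matches the paper's use of periodicity in the last line of the displayed computation.
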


\begin{proof}For the proof of $(2.27)$, see for example \cite[Proposition 3.1.5]{shen2018periodic}, for the proof of (ii), see for example \cite[Proposition 3.1.6]{shen2018periodic}. Therefore, we need only give the proof of $(\ref{2.21})$. By H\"{o}lder's inequality,
$$\left|S_\varepsilon(f)(x)\right|^p\leq\int_{\mathbb{R}^n}\left|f(y)\right|^p\rho_{\varepsilon}(x-y)dy.$$
This together with Fubini's Theorem, gives
\begin{equation}\label{2.24}\begin{aligned}
\int_{\mathbb{R}^n}\left|g(x/\varepsilon^2)\right|^p\left|S_\varepsilon(f)(x)\right|^pdx&\leq\iint_{\mathbb{R}^n\times
\mathbb{R}^n}\left|g(x/\varepsilon^2)\right|^p\left|f(y)\right|^p\rho_{\varepsilon}(x-y)dx dy\\
&\leq C \sup_{y\in\mathbb{R}^n}\varepsilon^{-n}\int_{|x-y|\leq\varepsilon/2}\left|g(x/\varepsilon^2)\right|^pdx||f||_{L^p(\mathbb{R}^n)}^p\\
&\leq C \sup_{y\in\mathbb{R}^n}\varepsilon^{n}\int_{|x-y|\leq1/(2\varepsilon)}\left|g(x)\right|^pdx||f||_{L^p(\mathbb{R}^n)}^p\\
&\leq C ||f||_{L^p(\mathbb{R}^n)}^p||g||_{L^p(Y)}^p, \end{aligned}\end{equation}
where we use the periodicity of $g$ and note that $0<\varepsilon\leq1$.
\end{proof}
\begin{rmk}Actually, under the assumption of Lemma 2.7 (i), if $0<\varepsilon\leq 1$, for any $\lambda\geq \mu>0$, there holds
\begin{equation}\label{2.25}
||g(\cdot/\varepsilon^\lambda)S_{\varepsilon^\mu}(f)||_{L^p(\mathbb{R}^n)}\leq C(p,n)||g||_{L^p(Y)}||f||_{L^p(\mathbb{R}^n)}.
\end{equation} However, the similar results couldn't hold for the function $g(\cdot/\varepsilon^\lambda)S_{\varepsilon^\mu}(f)$,
if $0<\lambda< \mu$, unless the function $g$ has better regularity.
\end{rmk}
\section{Convergence rates}
First of all, we introduce the following cut-off function $\psi_r\in C_0^1(\Omega)$ associated with $\Sigma_r$:
\begin{equation}\label{3.1}
\psi_r=1\ \ \text{in\ }\Sigma_{2r},\ \ \ \psi_r=0\ \ \text{outside\ }\Sigma_{r},\ \ \ |\nabla\psi_r|\leq C/r,
\end{equation}
where $\Sigma_r=\{x\in\Omega:\text{dist}(x,\partial\Omega)>r\}.$

\begin{lemma}Suppose that $(u_\varepsilon,p_\varepsilon)$, $(u_0,p_0)\in H^1(\Omega;\mathbb{R}^n)\times L^2(\Omega)$ satisfy
\begin{equation}
\left\{\begin{array}{rlrl}{\mathcal{L}_{\varepsilon}\left(u_{\varepsilon}\right)+\nabla p_{\varepsilon}} & {=\mathcal{L}_{0}\left(u_{0}\right)+\nabla p_{0}} & {} & {\text { in } \Omega} \\ {\operatorname{div}\left(u_{\varepsilon}\right)} & {=\operatorname{div}\left(u_{0}\right)} & {} & {\text { in } \Omega} \\ {u_{\varepsilon}} & {=u_{0}} & {} & {\text { on } \partial \Omega}.\end{array}\right.
\end{equation} Let
\begin{equation}\label{3.2}\begin{aligned}
w_\varepsilon^\beta(x)=&u_\varepsilon^\beta(x)-u_0^\beta(x)+\varepsilon \chi_j^{\beta\gamma}(x/\varepsilon)\psi_{2\varepsilon}S_\varepsilon\left(\partial_{x_j}u_0^\gamma\right)\\
&+\varepsilon^2\chi_j^{\beta\alpha}(x/\varepsilon,x/\varepsilon^2)\left[\psi_{2\varepsilon}S_\varepsilon\left(\partial_{x_j}u_0^\alpha\right)
-\partial_{y_j}\chi_{k}^{\alpha\gamma}(x/\varepsilon)\psi_{2\varepsilon}S_\varepsilon\left(\partial_{x_k}u_0^\gamma\right)\right],
\end{aligned}\end{equation} Then we have
\begin{equation}
\left\{\begin{array}{rlrl}{\mathcal{L}_{\varepsilon}\left(w_{\varepsilon}\right)+\nabla\left(p_{\varepsilon}-p_{0}\right)} & {=-\operatorname{div}(f)} & {} & {\text { in } \Omega} \\ {\operatorname{div}\left(w_{\varepsilon}\right)} & {=\operatorname{div}\phi} & {} & {\text { in } \Omega} \\ {w_{\varepsilon}} & {=0} & {} & {\text { on } \partial \Omega},\end{array}\right.
\end{equation} and the compatibility condition
\begin{equation}\int_{\Omega}\operatorname{div}\phi(x)dx=0,\end{equation}
where \begin{equation}\phi^\beta=\varepsilon \chi_j^{\beta\gamma}(y)\psi_{2\varepsilon}S_\varepsilon\left(\partial_{x_j}u_0^\gamma\right)
+\varepsilon^2\chi_j^{\beta\alpha}(y,z)\left[\psi_{2\varepsilon}S_\varepsilon\left(\partial_{x_j}u_0^\alpha\right)
-\partial_{y_j}\chi_{k}^{\alpha\gamma}(y)\psi_{2\varepsilon}S_\varepsilon\left(\partial_{x_k}u_0^\gamma\right)\right],\end{equation}
with $y=x/\varepsilon$, $z=x/\varepsilon^2$, and $f=(f_i^\alpha)=(H_{1,i}^\alpha+H_{2,i}^\alpha+H_{3,i}^\alpha+H_{4,i}^\alpha)$  with $H_{j,i}^\alpha$ $(j=1,\cdots,4)$ defined in $(3.8)$.
\end{lemma}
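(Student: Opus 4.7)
The plan is to verify each of the four assertions in turn, with the PDE for $w_\varepsilon$ being the substantive step. The boundary, divergence, and compatibility conditions are immediate from the definitions, and the heart of the argument is expanding $\mathcal{L}_\varepsilon(w_\varepsilon) + \nabla(p_\varepsilon - p_0)$ via the two-scale chain rule and reorganizing the outcome as $-\operatorname{div}(f)$.

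I would dispense with the easy items first. On $\partial\Omega$ we have $u_\varepsilon = u_0 = g$, and the cutoff $\psi_{2\varepsilon}$ from $(3.1)$ vanishes on $\partial\Omega$, so $\phi = 0$ there and hence $w_\varepsilon = 0$. Subtracting the incompressibility equations $\operatorname{div}(u_\varepsilon) = h = \operatorname{div}(u_0)$ yields $\operatorname{div}(u_\varepsilon - u_0) = 0$, so $\operatorname{div}(w_\varepsilon) = \operatorname{div}(\phi)$ by linearity. The compatibility condition $\int_\Omega \operatorname{div}(\phi)\,dx = 0$ then follows from the divergence theorem applied to $w_\varepsilon$, which vanishes on $\partial\Omega$.

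For the main PDE, I would subtract the momentum equations to obtain
\[
\mathcal{L}_\varepsilon(w_\varepsilon) + \nabla(p_\varepsilon - p_0) = \bigl[\mathcal{L}_0(u_0) - \mathcal{L}_\varepsilon(u_0)\bigr] + \mathcal{L}_\varepsilon(\phi),
\]
and then expand $\mathcal{L}_\varepsilon(\phi)$ term by term. The key point is that differentiating $\chi(x/\varepsilon)$ brings out a factor of $\varepsilon^{-1}$, while differentiating $\chi(x/\varepsilon, x/\varepsilon^2)$ produces both $\varepsilon^{-1}\partial_y$ and $\varepsilon^{-2}\partial_z$ contributions, so after the two derivatives in $\mathcal{L}_\varepsilon$ act on $\phi$ the principal contributions have order $\varepsilon^0$ and are precisely those in which no derivative lands on $\psi_{2\varepsilon}$ or on $S_\varepsilon(\nabla u_0)$. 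These principal contributions should cancel against $\mathcal{L}_\varepsilon(u_0) - \mathcal{L}_0(u_0)$ via the fast-scale corrector equation $(1.7)$ (which handles the $\varepsilon^{-2}$-scale terms together with the pressure $\pi_k^\beta(y,z)$), the slow-scale corrector equation $(1.9)$ (which handles the $\varepsilon^{-1}$-scale $\chi(y)$ terms together with the pressure $\pi_k^\beta(y)$), and the definition $(1.12)$ of $\widehat{a}_{ij}^{\alpha\beta}$ (which absorbs the cell averages). The third summand $\varepsilon^2\chi(y,z)\partial_{y}\chi(y)$ in $\phi$ is engineered precisely to kill the otherwise uncontrolled $\partial_z$-derivative acting on the $\chi(y)$ factor, which would spoil this cancellation.

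What remains after the principal cancellation is what gets packaged as $f = H_1 + H_2 + H_3 + H_4$ in $(3.8)$: the commutator terms where an $x$-derivative falls on $\psi_{2\varepsilon}$ or on $S_\varepsilon(\nabla u_0)$, the leading-order mismatch between $\partial_j u_0^\alpha$ and $\psi_{2\varepsilon}S_\varepsilon(\partial_j u_0^\alpha)$, and the lower-order $\varepsilon$-terms from the two-scale chain rule on $\chi(y,z)$. The surviving pressure pieces $\pi_k^\beta(y,z)$, $\pi_k^\beta(y)$, and $\pi_j^\gamma(y,z)\partial_{y_j}\chi_k^{\gamma\beta}(y)$ (which are exactly the three components of $\tilde{\pi}$ appearing later in Theorem 1.2) enter as gradient terms $\partial_\alpha\pi$ and can be put into divergence form via $\partial_\alpha\pi = \partial_i(\delta_{i\alpha}\pi)$, so they too fit inside some $H_{j,i}^\alpha$. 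The main obstacle, and the bulk of the work, is the bookkeeping: one must check that every term generated by the chain-rule expansion is assigned to exactly one $H_{j,i}^\alpha$, and that the coefficient matching forced by $(1.7)$, $(1.9)$ and $(1.12)$ produces no residual error that is not already in divergence form.
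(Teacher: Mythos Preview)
Your handling of the boundary condition, the divergence identity, and the compatibility condition is fine and matches the paper. The problem is in the momentum equation, where you bring in machinery that this lemma does not use and thereby mischaracterize what the $H_{j,i}^\alpha$ actually are.

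Look at the paper's $H_{2,i}^\alpha$ in $(3.8)$: it is
\[
\bigl[\widehat{a}_{ij}^{\alpha\beta}-a_{ij}^{\alpha\beta}+a_{ih}^{\alpha\gamma}\partial_{y_h}\chi_j^{\gamma\beta}(y)+a_{ih}^{\alpha\gamma}\partial_{z_h}\chi_j^{\gamma\beta}(y,z)-a_{ih}^{\alpha\gamma}\partial_{z_h}\chi_k^{\gamma\eta}(y,z)\partial_{y_k}\chi_j^{\gamma\beta}(y)\bigr]\psi_{2\varepsilon}S_\varepsilon(\partial_j u_0^\beta).
\]
This is precisely the $O(1)$ ``principal'' flux defect that you say should \emph{cancel} via $(1.7)$, $(1.9)$, $(1.12)$. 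It does not cancel; it is kept intact as part of $f$. No corrector equation and no cell pressure $\pi_k^\beta$ is invoked anywhere in the proof of Lemma~3.1, and none of the $H_{j,i}^\alpha$ in $(3.8)$ contains a $\pi$. The corrector equations and the pressures enter only in the \emph{next} step, Lemma~3.2, where $H_2$ is rewritten as $(I_1+I_2+I_3)\psi_{2\varepsilon}S_\varepsilon(\nabla u_0)$ and the flux correctors $E_{m,kij}$ and $q_{m,ik}$ of Lemmas~2.6--2.8 are used to peel off gradient pieces into a modified pressure $z_\varepsilon$.

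The paper's proof of Lemma~3.1 is therefore purely algebraic and much shorter than your outline: compute the flux $a_{ih}^{\alpha\beta}(x/\varepsilon,x/\varepsilon^2)\partial_h w_\varepsilon^\beta$ by the chain rule (this is $(3.7)$), then add and subtract $\widehat{a}_{ih}^{\alpha\beta}\partial_h u_0^\beta$ and $\widehat{a}_{ij}^{\alpha\beta}\psi_{2\varepsilon}S_\varepsilon(\partial_j u_0^\beta)$ to regroup the result as $H_0+H_1+H_2+H_3+H_4$ in $(3.8)$. The only analytic input is the hypothesis $\mathcal{L}_\varepsilon u_\varepsilon+\nabla p_\varepsilon=\mathcal{L}_0 u_0+\nabla p_0$, which gives $-\partial_i H_{0,i}^\alpha=-\partial_\alpha(p_\varepsilon-p_0)$ and hence $\mathcal{L}_\varepsilon(w_\varepsilon)^\alpha+\partial_\alpha(p_\varepsilon-p_0)=-\partial_i(H_{1,i}^\alpha+\cdots+H_{4,i}^\alpha)$. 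Your plan, by contrast, would produce a different decomposition (with $\pi$'s present and $H_2$ already simplified), which is not what the statement asks you to verify.
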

\begin{proof}By direct computation,  we have
\begin{equation}\label{3.4}\begin{aligned}
&a_{ih}^{\alpha\beta}(x/\varepsilon,x/\varepsilon^2)\partial_hw_\varepsilon^{\beta}\\
=&a_{ih}^{\alpha\beta}(\partial_hu_\varepsilon^{\beta}-\partial_hu_0^{\beta})+a_{ih}^{\alpha\beta}\partial_{y_h}\chi_j^{\beta\gamma}(x/\varepsilon)\psi_{2\varepsilon}S_\varepsilon\left(\partial_{x_j}u_0^{\gamma}\right)
+\varepsilon a_{ih}^{\alpha\beta} \chi_j^{\beta\gamma}(x/\varepsilon)\partial_h\left(\psi_{2\varepsilon}S_\varepsilon\left(\partial_{x_j}u_0^{\gamma}\right)\right)\\
&+a_{ih}^{\alpha\beta}\partial_{z_h}\chi_j^{\beta\gamma}(x/\varepsilon,x/\varepsilon^2)\left[\psi_{2\varepsilon}S_\varepsilon\left(\partial_{x_j}u_0^{\gamma}\right)-\partial_{y_j}\chi_{k}^{\gamma\eta}(x/\varepsilon)
\psi_{2\varepsilon}S_\varepsilon\left(\partial_{x_k}u_0^{\eta}\right)\right]\\
&+\varepsilon a_{ih}^{\alpha\beta}\partial_{y_h}\chi_j^{\beta\gamma}(x/\varepsilon,x/\varepsilon^2)\left[\psi_{2\varepsilon}S_\varepsilon\left(\partial_{x_j}u_0^{\gamma}\right)-\partial_{y_j}\chi_{k}^{\gamma\eta}(x/\varepsilon)
\psi_{2\varepsilon}S_\varepsilon\left(\partial_{x_k}u_0^{\eta}\right)\right]\\
&+\varepsilon^2a_{ih}^{\alpha\beta}\chi_j^{\beta\gamma}(x/\varepsilon,x/\varepsilon^2)\left[\partial_h\left(\psi_{2\varepsilon}S_\varepsilon\left(\partial_{x_j}u_0^{\gamma}\right)\right)
-\partial_{y_j}\chi_{k}^{\gamma\eta}(x/\varepsilon)\partial_h\left(\psi_{2\varepsilon}S_\varepsilon\left(\partial_{x_k}u_0^{\eta}\right)\right)\right]\\
&-\varepsilon a_{ih}^{\alpha\beta}\chi_j^{\beta\gamma}(x/\varepsilon,x/\varepsilon^2)\partial^2_{y_jy_h}\chi_{k}^{\gamma\eta}(x/\varepsilon)\psi_{2\varepsilon}S_\varepsilon\left(\partial_{x_k}u_0^{\eta}\right)\\
=&:H_{0,i}^\alpha+H_{1,i}^\alpha+H_{2,i}^\alpha+H_{3,i}^\alpha+H_{4,i}^\alpha
\end{aligned}\end{equation}
and \begin{equation}\label{3.5}\begin{aligned}
H_{0,i}^\alpha=&a_{ih}^{\alpha\beta}\partial_hu_\varepsilon^{\beta}-\widehat{a}_{ih}^{\alpha\beta}\partial_hu_0^{\beta},\\
H_{1,i}^\alpha=&\left(\widehat{a}_{ih}^{\alpha\beta}-a_{ih}^{\alpha\beta})(\partial_hu_0^{\beta}-\psi_{2\varepsilon}S_\varepsilon\left(\partial_{x_h}u_0^{\beta}\right)\right)\\
H_{2,i}^\alpha=&[\widehat{a}_{ij}^{\alpha\beta}-a_{ij}^{\alpha\beta}+a_{ih}^{\alpha\gamma}\partial_{y_h}\chi_j^{\gamma\beta}(x/\varepsilon)+a_{ih}^{\alpha\gamma}\partial_{z_h}\chi_j^{\gamma\beta}(x/\varepsilon,x/\varepsilon^2)\\
& -a_{ih}^{\alpha\gamma}\partial_{z_h}\chi_k^{\gamma\eta}(x/\varepsilon,x/\varepsilon^2)\partial_{y_k}\chi_{j}^{\gamma\beta}(x/\varepsilon)]\psi_{2\varepsilon}S_\varepsilon\left(\partial_{j}u_0^{\beta}\right)\\
H_{3,i}^\alpha=&\varepsilon a_{ih}^{\alpha\beta} \chi_j^{\beta\gamma}(x/\varepsilon)\partial_h\left(\psi_{2\varepsilon}S_\varepsilon\left(\partial_{x_j}u_0^{\gamma}\right)\right)-\varepsilon a_{ih}^{\alpha\beta}\chi_j^{\beta\gamma}(x/\varepsilon,x/\varepsilon^2)\partial^2_{y_jy_h}\chi_{k}^{\gamma\eta}(x/\varepsilon)\psi_{2\varepsilon}S_\varepsilon\left(\partial_{x_k}u_0^{\eta}\right)\\
&+\varepsilon a_{ih}^{\alpha\beta}\partial_{y_h}\chi_j^{\beta\gamma}(x/\varepsilon,x/\varepsilon^2)\left[\psi_{2\varepsilon}S_\varepsilon\left(\partial_{x_j}u_0^{\gamma}\right)-\partial_{y_j}\chi_{k}^{\gamma\eta}(x/\varepsilon)
\psi_{2\varepsilon}S_\varepsilon\left(\partial_{x_k}u_0^{\eta}\right)\right],\\
H_{4,i}^\alpha=&\varepsilon^2a_{ih}^{\alpha\beta}\chi_j^{\beta\gamma}(x/\varepsilon,x/\varepsilon^2)\left[\partial_h\left(\psi_{2\varepsilon}S_\varepsilon\left(\partial_{x_j}u_0^{\gamma}\right)\right)
-\partial_{y_j}\chi_{k}^{\gamma\eta}(x/\varepsilon)\partial_h\left(\psi_{2\varepsilon}S_\varepsilon\left(\partial_{x_k}u_0^{\eta}\right)\right)\right].
\end{aligned}\end{equation}Consequently, according to $(3.2)$, we have
\begin{equation*}
\left\{\begin{array}{rlrl}{\mathcal{L}_{\varepsilon}\left(w_{\varepsilon}\right)+\nabla\left(p_{\varepsilon}-p_{0}\right)} & {=-\operatorname{div}(f)} & {} & {\text { in } \Omega} \\ {\operatorname{div}\left(w_{\varepsilon}\right)} & {=\operatorname{div}\phi} & {} & {\text { in } \Omega} \\ {w_{\varepsilon}} & {=0} & {} & {\text { on } \partial \Omega},\end{array}\right.
\end{equation*}
where $f_i^\alpha=H_{1,i}^\alpha+H_{2,i}^\alpha+H_{3,i}^\alpha+H_{4,i}^\alpha$. The compatibility condition $(3.5)$ is easy to verify since $\psi$ is a cut-off function.\end{proof}
\begin{lemma}Suppose that $A(y,z)$ satisfies $(1.4)-(1.6)$. Assume that $(u_\varepsilon,p_\varepsilon)$, $(u_0,p_0)$ are weak solutions to $(1.1)$ and $(1.11)$, respectively. Let $w_\varepsilon=(w_\varepsilon^\beta)$ with $w_\varepsilon^\beta$ defined in $(3.3)$ and
\begin{equation}\begin{aligned}
z_\varepsilon=&p_\varepsilon-p_0+\varepsilon^2\partial_{x_i}\left(q_{1,ik}^\beta(y,z)\psi_{2\varepsilon}S_\varepsilon(\partial_{k}u_0^\beta)\right)+\varepsilon \partial_{x_i}\left(q_{2,ik}^\beta(y)\psi_{2\varepsilon}S_\varepsilon(\partial_{k}u_0^\beta)\right)\\
&+\varepsilon^2\partial_{x_i}\left(q_{3,ik}^\beta(y,z)\psi_{2\varepsilon}S_\varepsilon(\partial_{k}u_0^\beta)\right),\\
& \text{with } y=x/\varepsilon,z=x/\varepsilon^2.
\end{aligned}\end{equation}
 Then $(w_\varepsilon, z_\varepsilon)$ satisfies \begin{equation}
\left\{\begin{array}{rlrl}{\mathcal{L}_{\varepsilon}\left(w_{\varepsilon}\right)+\nabla z_\varepsilon} & {=-\operatorname{div}(\tilde{f})} & {} & {\text { in } \Omega} \\ {\operatorname{div}\left(w_{\varepsilon}\right)} & {=\operatorname{div}\phi} & {} & {\text { in } \Omega} \\ {w_{\varepsilon}} & {=0} & {} & {\text { on } \partial \Omega},\end{array}\right.
\end{equation} where $\tilde{f}=(\tilde{f}_i^\alpha)$ and \begin{equation}\tilde{f_i^\alpha}=H_{1,i}^\alpha+H_{21,i}^\alpha+H_{22,i}^\alpha+H_{23,i}^\alpha+H_{3,i}^\alpha+H_{4,i}^\alpha\end{equation} with $H_{j2,i}^\alpha$ (j=1,2,3) defined in $(3.14)$, $(3.15)$ and $(3.16)$, respectively.\end{lemma}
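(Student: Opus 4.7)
The plan is to keep $w_\varepsilon$ unchanged from Lemma~3.1, so that the boundary condition $w_\varepsilon=0$ on $\partial\Omega$ and the divergence identity $\operatorname{div}(w_\varepsilon)=\operatorname{div}(\phi)$ are inherited immediately; only the momentum equation needs reworking. Starting from the identity $\mathcal{L}_\varepsilon w_\varepsilon+\nabla(p_\varepsilon-p_0)=-\operatorname{div}(H_1+H_2+H_3+H_4)$ proved in Lemma~3.1, I would peel the ``gradient-type'' contributions out of $-\operatorname{div}(H_2)$ and absorb them into the modified pressure $z_\varepsilon$, which is defined precisely so that this absorption is exact.

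The first step is to notice, by direct algebraic inspection of the bracket appearing in $H_{2,i}^\alpha$ in (3.8), that it equals $I_{1,ij}^{\alpha\beta}(y,z)+I_{2,ij}^{\alpha\beta}(y)+I_{3,ij}^{\alpha\beta}(y,z)$, where the three flux-corrector error tensors were introduced in Lemmas~2.6--2.8. This gives the natural splitting
\[
H_{2,i}^\alpha=\sum_{m=1}^{3}I_{m,ij}^{\alpha\beta}\,\psi_{2\varepsilon}S_\varepsilon(\partial_j u_0^\beta).
\]
Into each summand I would substitute the representation $I_1=\partial_{z_k}E_{1,kij}+\partial_{z_\alpha}q_{1,ij}$, $I_2=\partial_{y_k}E_{2,kij}+\partial_{y_\alpha}q_{2,ij}$ and $I_3=\partial_{z_k}E_{3,kij}-\partial_{z_\alpha}q_{3,ij}$ supplied by those lemmas.

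Next I apply $-\partial_{x_i}$ to each summand, using $\partial_{x_i}=\varepsilon^{-1}\partial_{y_i}+\varepsilon^{-2}\partial_{z_i}$ when differentiating functions of $(y,z)=(x/\varepsilon,x/\varepsilon^2)$. The contribution of the $q_m$-pieces, after swapping $\partial_{x_\alpha}$ with $\partial_{x_i}$, becomes
\[
-\partial_{x_\alpha}\partial_{x_i}\bigl(\varepsilon^{\sigma_m}q_{m,ik}^\beta\,\psi_{2\varepsilon}S_\varepsilon(\partial_k u_0^\beta)\bigr)+\bigl(\text{terms in which }\partial_{x_i}\text{ hits }\psi_{2\varepsilon}S_\varepsilon\bigr),
\]
with $\sigma_1=\sigma_3=2$ and $\sigma_2=1$, matching exactly the $\varepsilon$-prefactors in the definition of $z_\varepsilon$. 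The first summand is, by construction, minus the $\alpha$-component of $\nabla\bigl(z_\varepsilon-(p_\varepsilon-p_0)\bigr)$, so once the left-hand side is rewritten as $\mathcal{L}_\varepsilon w_\varepsilon+\nabla z_\varepsilon$ these pressure-gradient pieces cancel. The identities $\partial_{z_i}q_{1,ik}=\pi_k^\alpha(y,z)$, $\partial_{y_i}q_{2,ik}=\pi_k^\alpha(y)$ and $\partial_{z_i}q_{3,ik}=-\pi_j^\gamma(y,z)\partial_{y_j}\chi_k^{\gamma\alpha}(y)$ from Lemmas~2.6--2.8 are what identify the leading terms of $z_\varepsilon-(p_\varepsilon-p_0)$ with the $\tilde\pi$ appearing in Theorem~1.2.

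What survives defines the new $H_{21,i}^\alpha,H_{22,i}^\alpha,H_{23,i}^\alpha$ announced in (3.14)--(3.16): each is the $\partial E_m\cdot\psi_{2\varepsilon}S_\varepsilon$ piece together with the Leibniz errors produced when $\partial_{x_i}$ falls on $\psi_{2\varepsilon}S_\varepsilon(\partial_j u_0^\beta)$ rather than on the corrector. Collecting these with the unchanged $H_1,H_3,H_4$ yields $\mathcal{L}_\varepsilon w_\varepsilon+\nabla z_\varepsilon=-\operatorname{div}(\tilde f)$, as asserted. The main difficulty is simply the careful bookkeeping of the two microscopic scales: chain-rule splittings produce $\varepsilon^{-1}$ and $\varepsilon^{-2}$ singular prefactors that must cancel exactly against the $\varepsilon^2,\varepsilon,\varepsilon^2$ factors attached to the three $q_m$-corrections inside $z_\varepsilon$. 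The antisymmetry $E_{m,kij}^{\alpha\beta}=-E_{m,ikj}^{\alpha\beta}$ plays no role in deriving this identity; it is reserved for the $L^2$ estimate of $\tilde f$ in the next step, where an extra factor of $\varepsilon$ is extracted from $\partial E_m\cdot\psi_{2\varepsilon}S_\varepsilon$ via the standard antisymmetrization trick.
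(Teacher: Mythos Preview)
Your overall plan matches the paper's proof almost exactly: start from Lemma~3.1, split $H_{2,i}^\alpha=(I_{1,ij}^{\alpha\beta}+I_{2,ij}^{\alpha\beta}+I_{3,ij}^{\alpha\beta})\psi_{2\varepsilon}S_\varepsilon(\partial_j u_0^\beta)$, insert the representations from Lemmas~2.6--2.8, convert $\partial_{z_k}$ (resp.\ $\partial_{y_k}$) into $\varepsilon^2\partial_{x_k}-\varepsilon\partial_{y_k}$ (resp.\ $\varepsilon\partial_{x_k}$), and isolate the pressure corrections by commuting $\partial_{x_i}\partial_{x_\alpha}$.

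There is, however, one genuine slip. Your claim that ``the antisymmetry $E_{m,kij}^{\alpha\beta}=-E_{m,ikj}^{\alpha\beta}$ plays no role in deriving this identity'' is wrong, and in fact you have the matter reversed. When you write $\partial_{z_k}E_{1,kij}^{\alpha\beta}\cdot\psi_{2\varepsilon}S_\varepsilon(\partial_j u_0^\beta)=\varepsilon^2\partial_{x_k}E_{1,kij}^{\alpha\beta}\cdot\psi_{2\varepsilon}S_\varepsilon(\partial_j u_0^\beta)-\varepsilon\partial_{y_k}E_{1,kij}^{\alpha\beta}\cdot\psi_{2\varepsilon}S_\varepsilon(\partial_j u_0^\beta)$ and then apply Leibniz to the first piece, you produce
\[
\partial_{x_i}\partial_{x_k}\!\bigl(\varepsilon^2 E_{1,kij}^{\alpha\beta}\psi_{2\varepsilon}S_\varepsilon(\partial_j u_0^\beta)\bigr)
-\partial_{x_i}\!\bigl(\varepsilon^2 E_{1,kij}^{\alpha\beta}\partial_{x_k}(\psi_{2\varepsilon}S_\varepsilon(\partial_j u_0^\beta))\bigr).
\]
The second term is part of $H_{21,i}^\alpha$, but the first term is \emph{not} in the stated $\tilde f$: it disappears precisely because $\partial_{x_i}\partial_{x_k}$ is symmetric in $(i,k)$ while $E_{1,kij}^{\alpha\beta}$ is antisymmetric. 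The same cancellation is needed for $E_2$ in (3.15) and $E_3$ in (3.16). Without invoking the antisymmetry at this stage you cannot arrive at the specific $H_{2m,i}^\alpha$ defined in (3.14)--(3.16), and the identity in the lemma as stated would fail.

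Conversely, in the subsequent $L^2$ bound (Lemma~3.3) the paper does \emph{not} use the antisymmetrization trick you allude to; the $\varepsilon$-gain is already built into $H_{2m,i}^\alpha$ by the chain-rule prefactors, and the estimate proceeds via the Fourier expansion of $I_{3,ij}^{\alpha\beta}$ in $z$ together with Plancherel. So the antisymmetry is consumed here, in the algebraic identity, not later.
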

\begin{proof}According to the first line of the equation $(3.4)$,
\begin{equation}
\left[\mathcal{L}_{\varepsilon}\left(w_{\varepsilon}\right)\right]^{\alpha}=-\nabla_{i} f_{i}^{\alpha}-\nabla_{\alpha}\left(p_{\varepsilon}-p_{0}\right), \quad \text { in } \Omega.
\end{equation}

To obtain the first line of $(3.10)$, we need only to check the term $H_{2,i}^\alpha$ in $f_i^\alpha$.
We firstly observe that
\begin{equation}H_{2,i}^\alpha=(I_{1,ij}^{\alpha\beta}(x/\varepsilon,x/\varepsilon^2)+I_{2,ij}^{\alpha\beta}(x/\varepsilon)+I_{3,ij}^{\alpha\beta}(x/\varepsilon,x/\varepsilon^2))\psi_{2\varepsilon}S_\varepsilon(\partial_{j}u_0^\beta)\end{equation} with $I_{1,ij}^{\alpha\beta}$, $I_{2,ij}^{\alpha\beta}$ and $I_{3,ij}^{\alpha\beta}$ defined in $(2.15)$, $(2.20)$ and $(2.23)$, respectively.

According to $(\ref{1.6})$, we have $\iint_{Y\times Z}I_{1,ij}^{\alpha\beta}(y,z)+I_{2,ij}^{\alpha\beta}(y)+I_{3,ij}(y,z)dydz=0$, then $\iint_{Y\times Z}I_{2,ij}^{\alpha\beta}(y)dydz=0$ due to Lemma 2.6 (i) and Lemma 2.8 (i). Therefore, we have
$\int_{Y}I_{2,ij}^{\alpha\beta}(y)dy=0$ which satisfies the assumption of Lemma 2.7.

In view of $(2.16)$ and recalling $y=x/\varepsilon,z=x/\varepsilon^2$, we have
\begin{equation}\begin{aligned}
&\partial_{x_i}\left[I_{1,ij}^{\alpha\beta}(y,z)\psi_{2\varepsilon}S_\varepsilon(\partial_{j}u_0^\beta)\right]\\
=&\partial_{x_i}\left[\left(\partial_{z_k}E^{\alpha\beta}_{1,kij}(y,z)+\partial_{z_\alpha}q_{1,ik}^\beta(y,z)\right)\psi_{2\varepsilon}S_\varepsilon(\partial_{j}u_0^\beta)\right]\\
=&\partial_{x_i}\left[\left(\varepsilon^2\partial_{x_k}E^{\alpha\beta}_{1,kij}(y,z)-\varepsilon\partial_{y_k}E^{\alpha\beta}_{1,kij}(y,z)+\varepsilon^2\partial_{x_\alpha}q_{1,ik}^\beta(y,z)-\varepsilon\partial_{y_\alpha}q_{1,ik}^\beta(y,z)\right)\psi_{2\varepsilon}S_\varepsilon(\partial_{j}u_0^\beta)\right]\\
=&\partial_{x_i}\left[\left(-\varepsilon\partial_{y_k}E^{\alpha\beta}_{1,kij}(y,z)-\varepsilon\partial_{y_\alpha}q_{1,ik}^\beta(y,z)\right)\psi_{2\varepsilon}S_\varepsilon(\partial_{j}u_0^\beta)\right]\\
&+\partial_{x_i}\left[\partial_{x_k}\left(\varepsilon^2E^{\alpha\beta}_{1,kij}(y,z)\psi_{2\varepsilon}S_\varepsilon(\partial_{j}u_0^\beta)\right)-\varepsilon^2E^{\alpha\beta}_{1,kij}(y,z)\partial_{x_k}\left(\psi_{2\varepsilon}S_\varepsilon(\partial_{j}u_0^\beta)\right)\right]\\
&+\partial_{x_i}\left[\varepsilon^2\partial_{x_\alpha}\left(q_{1,ik}^\beta(y,z)\psi_{2\varepsilon}S_\varepsilon(\partial_{j}u_0^\beta)\right)-\varepsilon^2q_{1,ik}^\beta(y,z)\partial_{x_\alpha}\left(\psi_{2\varepsilon}S_\varepsilon(\partial_{j}u_0^\beta)\right)\right]\\
=&\partial_{x_i}\left[\left(-\varepsilon\partial_{y_k}E^{\alpha\beta}_{1,kij}(y,z)-\varepsilon\partial_{y_\alpha}q_{1,ik}^\beta(y,z)\right)\psi_{2\varepsilon}S_\varepsilon(\partial_{j}u_0^\beta)
-\varepsilon^2E^{\alpha\beta}_{1,kij}(y,z)\partial_{x_k}\left(\psi_{2\varepsilon}S_\varepsilon(\partial_{j}u_0^\beta)\right)\right.\\
&\ \ \ \ \ -\left.\varepsilon^2q_{1,ik}^\beta(y,z)\partial_{x_\alpha}\left(\psi_{2\varepsilon}S_\varepsilon(\partial_{j}u_0^\beta)\right)\right]
+\partial_{x_\alpha}\left[\varepsilon^2\partial_{x_i}\left(q_{1,ik}^\beta(y,z)\psi_{2\varepsilon}S_\varepsilon(\partial_{j}u_0^\beta)\right)\right]\\
=&:\partial_{x_i}H_{21,i}^\alpha+\partial_{x_\alpha}\left[\varepsilon^2\partial_{x_i}\left(q_{1,ik}^\beta(y,z)\psi_{2\varepsilon}S_\varepsilon(\partial_{j}u_0^\beta)\right)\right]\\
\end{aligned}\end{equation} Similarly, we have
\begin{equation}\begin{aligned}
&\partial_{x_i}\left[I_{1,ij}^{\alpha\beta}(y)\psi_{2\varepsilon}S_\varepsilon(\partial_{j}u_0^\beta)\right]\\
=&\partial_{x_i}\left[-\varepsilon E^{\alpha\beta}_{2,kij}(y)\partial_{x_k}\left(\psi_{2\varepsilon}S_\varepsilon(\partial_{j}u_0^\beta)\right)-\varepsilon q_{2,ik}^\beta(y)\partial_{x_\alpha}\left(\psi_{2\varepsilon}S_\varepsilon(\partial_{j}u_0^\beta)\right)\right]\\
&+\partial_{x_\alpha}\left[\varepsilon \partial_{x_i}\left(q_{2,ik}^\beta(y)\psi_{2\varepsilon}S_\varepsilon(\partial_{j}u_0^\beta)\right)\right]\\
=&:\partial_{x_i}H_{22,i}^\alpha+\partial_{x_\alpha}\left[\varepsilon \partial_{x_i}\left(q_{2,ik}^\beta(y)\psi_{2\varepsilon}S_\varepsilon(\partial_{j}u_0^\beta)\right)\right],
\end{aligned}\end{equation}
and \begin{equation}\begin{aligned}
&\partial_{x_i}\left[I_{3,ij}^{\alpha\beta}(y,z)\psi_{2\varepsilon}S_\varepsilon(\partial_{j}u_0^\beta)\right]\\
=&\partial_{x_i}\left[\left(-\varepsilon\partial_{y_k}E^{\alpha\beta}_{3,kij}(y,z)-\varepsilon\partial_{y_\alpha}q_{3,ik}^\beta(y,z)\right)\psi_{2\varepsilon}S_\varepsilon(\partial_{j}u_0^\beta)
-\varepsilon^2E^{\alpha\beta}_{3,kij}(y,z)\partial_{x_k}\left(\psi_{2\varepsilon}S_\varepsilon(\partial_{j}u_0^\beta)\right)\right.\\
&\ \ \ \ \ -\left.\varepsilon^2q_{3,ik}^\beta(y,z)\partial_{x_\alpha}\left(\psi_{2\varepsilon}S_\varepsilon(\partial_{j}u_0^\beta)\right)\right]
+\partial_{x_\alpha}\left[\varepsilon^2\partial_{x_i}\left(q_{3,ik}^\beta(y,z)\psi_{2\varepsilon}S_\varepsilon(\partial_{j}u_0^\beta)\right)\right]\\
=&:\partial_{x_i}H_{23,i}^\alpha+\partial_{x_\alpha}\left[\varepsilon^2\partial_{x_i}\left(q_{3,ik}^\beta(y,z)\psi_{2\varepsilon}S_\varepsilon(\partial_{j}u_0^\beta)\right)\right].
\end{aligned}\end{equation} Consequently, combining $(3.13)-(3.16)$ gives the desired equation $(3.10)$.
\end{proof}

In order to obtain the error estimates, we firstly give the estimate of $||H_{21,i}^\alpha+H_{22,i}^\alpha+H_{23,i}^\alpha||_{L^2(\Omega)}$ by using the method of Fourier transform to separate the different scales of $x$.
\begin{lemma} $||H_{21,i}^\alpha+H_{22,i}^\alpha+H_{23,i}^\alpha||_{L^2(\Omega)}\leq C\varepsilon ||\nabla u_0||_{L^2(\Omega)}.$ \end{lemma}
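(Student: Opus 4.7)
My plan is to bound each summand of $H_{21}+H_{22}+H_{23}$ individually, grouping them by whether the corrector it contains depends only on $y$ (the case of $H_{22}$) or on both scales $y=x/\varepsilon$ and $z=x/\varepsilon^2$ (the cases of $H_{21}$ and $H_{23}$). The first step in every case is to apply Leibniz to the factor $\partial_{x_k}(\psi_{2\varepsilon}S_\varepsilon(\partial_j u_0^\beta))$ appearing in the $\varepsilon^2$-prefactor terms. When $\partial_{x_k}$ hits $\psi_{2\varepsilon}$, the bound $|\nabla\psi_{2\varepsilon}|\leq C\varepsilon^{-1}$ turns $\varepsilon^2$ into $\varepsilon$. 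When $\partial_{x_k}$ hits $S_\varepsilon$, I would write $\varepsilon\,\partial_{x_k}S_\varepsilon=\tilde S_\varepsilon$, where $\tilde S_\varepsilon$ is the convolution operator built from $\partial_k\rho$ in place of $\rho$; since $\partial_k\rho\in C_c^\infty(B(0,1/2))$, $\tilde S_\varepsilon$ obeys exactly the same $L^p$-bounds as $S_\varepsilon$ in Lemma 2.9. Crucially, this observation means no derivative on $u_0$ beyond $\partial_j u_0$ ever appears, so the final estimate is controlled by $\|\nabla u_0\|_{L^2(\Omega)}$ rather than $\|u_0\|_{H^2(\Omega)}$.

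For the $H_{22}$ terms, which only involve $E_{2,kij}^{\alpha\beta}(y)$ and $q_{2,ik}^\beta(y)$, I would apply Lemma 2.9(i) directly together with the $L^2(Y)$-bounds $\|E_2\|_{L^2(Y)}+\|q_2\|_{L^2(Y)}\leq C$ coming from Lemma 2.7 and (2.22). Each such piece is then estimated by $\leq C\varepsilon\|\nabla u_0\|_{L^2(\Omega)}$ essentially immediately.

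The real difficulty is in $H_{21}$ and $H_{23}$, where the correctors $E_1(y,z),\,q_1(y,z),\,E_3(y,z),\,q_3(y,z)$ as well as their $y$-derivatives $\partial_{y_k}E_1,\,\partial_{y_\alpha}q_1,\,\partial_{y_k}E_3,\,\partial_{y_\alpha}q_3$ couple the two scales. Lemma 2.9(i) does not directly treat $g(x/\varepsilon,x/\varepsilon^2)$. The plan is to establish and use a two-scale smoothing inequality
\[
\|g(x/\varepsilon,x/\varepsilon^2)\,S_\varepsilon(F)\|_{L^2(\mathbb{R}^n)}\leq C\|g\|_{L^2(Y\times Z)}\|F\|_{L^2(\mathbb{R}^n)},
\]
valid for any $Y$-$Z$ periodic $g$ and $F\in L^2(\mathbb{R}^n)$ (and the analogue with $\tilde S_\varepsilon$ in place of $S_\varepsilon$). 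Given this inequality, the bounds from Lemmas 2.5, 2.6 and 2.8 for $E_1,q_1,E_3,q_3$, together with the Lipschitz-in-$y$ estimate (2.17) for $\partial_{y_k}E_1,\partial_{y_\alpha}q_1$ and its analogue for the index-$3$ correctors, supply $L^2(Y\times Z)$ bounds of $O(1)$, which combined with the extracted $\varepsilon$ prefactor finishes each estimate.

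The main technical obstacle is the two-scale inequality, which is exactly the Fourier-transform method of \cite{Zhang1909}. I would expand $g(y,z)=\sum_{l\in\mathbb{Z}^n}g_l(y)e^{2\pi i l\cdot z}$; the $l=0$ mode is a pure function of $y$ and is handled by Lemma 2.9(i). For $l\neq 0$, substituting $z=x/\varepsilon^2$ produces the rapid unimodular oscillation $e^{2\pi i l\cdot x/\varepsilon^2}$, and Plancherel in $x$ together with the rapid decay of $\hat\rho$ localizes the spectrum of $g_l(x/\varepsilon)e^{2\pi i l\cdot x/\varepsilon^2}S_\varepsilon F$ to $\varepsilon^{-1}$-neighborhoods of the discrete frequencies $m/\varepsilon+l/\varepsilon^2$, $m\in\mathbb{Z}^n$. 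For different $l$ the corresponding clusters sit at distance $\gtrsim\varepsilon^{-2}$ from one another and hence contribute almost orthogonally; Parseval collapses the $l$-sum to $\sum_l\|g_l\|_{L^2(Y)}^2=\|g\|_{L^2(Y\times Z)}^2$, delivering the desired bound. The careful separation-of-scales argument is where I expect the bulk of the technical work to sit, but it is essentially decoupled from the rest of the reasoning, which is then routine bookkeeping.
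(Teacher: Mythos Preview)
Your strategy hinges on the two-scale smoothing inequality
\[
\|g(x/\varepsilon,x/\varepsilon^2)\,S_\varepsilon F\|_{L^2(\mathbb{R}^n)}\leq C\|g\|_{L^2(Y\times Z)}\|F\|_{L^2(\mathbb{R}^n)},
\]
but this inequality is \emph{false} with only the $L^2(Y\times Z)$ norm on the right. A quick way to see it: for $\varepsilon=1/N$ take $g(y,z)=\phi(y)\psi(z)$ with $\phi=\psi=\delta^{-1/2}\mathbf{1}_{[0,\delta]}$ (periodized) and $F=\mathbf{1}_{[0,1]}$; then $\|g\|_{L^2(Y\times Z)}=\|F\|_{L^2}=1$, while a direct computation gives $\|g(x/\varepsilon,x/\varepsilon^2)S_\varepsilon F\|_{L^2}^2\approx\varepsilon/\delta\to\infty$ as $\delta\to 0$. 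The flaw in your orthogonality heuristic is the claim that the frequency clusters $\{m/\varepsilon+l/\varepsilon^2:m\in\mathbb{Z}^n\}$ for different $l$ sit at distance $\gtrsim\varepsilon^{-2}$: in fact the cluster for $l$ is the coset $l/\varepsilon^2+\varepsilon^{-1}\mathbb{Z}^n$, and for $l\neq l'$ its distance to the $l'$-cluster equals $\varepsilon^{-1}\operatorname{dist}((l-l')/\varepsilon,\mathbb{Z}^n)$, which is $0$ whenever $\varepsilon=1/N$ and can be arbitrarily small in general. So the modes are not almost orthogonal, and ``Parseval collapses the $l$-sum'' does not go through.

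What the paper actually does is different in a crucial way: it Fourier-expands $I_{3,ij}(y,z)$ in $z$ and writes $E_3,q_3$ and their $y$-derivatives explicitly in terms of the coefficients $\widehat{I}_{3,kij}(y)$, picking up a factor $|k|^{-1}$ from the inverse Laplacian in the auxiliary Stokes cell problem (2.25). It then bounds, for instance,
\[
|\partial_{y}E_{3}(y,z)|\leq C\sum_{k\neq 0}|k|^{-1}\,|\nabla_y\widehat{I}_{3,kij}(y)|,
\]
a function of $y$ \emph{alone}, so that the \emph{single}-scale smoothing estimate (2.27) applies directly. The $z$-variable is eliminated by taking absolute values, not by orthogonality; the price is a Cauchy--Schwarz in $k$ against $\sum|k|^{-2}$, after which Plancherel in $z$ returns $\|\nabla_y I_3\|_{L^2(Y\times Z)}$. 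This is the missing idea in your plan: you must exploit the extra $z$-regularity built into the flux correctors (the $|k|^{-1}$ gain) to reduce the two-scale product to a one-scale one, rather than attempt a general $L^2(Y\times Z)$ bound that cannot hold.
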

\begin{proof}We note that the estimate of $H_{23,i}^\alpha$ is the most difficult to handle, therefore we need only to estimate $||H_{23,i}^\alpha||_{L^2(\Omega)}$, since the others are even easier and totally similarly to $H_{23,i}^\alpha$. Recall that
\begin{equation}H_{23,i}^\alpha=:T_1+T_2+T_3+T_4\end{equation}
with
\begin{equation}\begin{aligned}
&T_1=-\varepsilon\partial_{y_k}E^{\alpha\beta}_{3,kij}(y,z)\psi_{2\varepsilon}S_\varepsilon(\partial_{j}u_0^\beta),\\
&T_2=-\varepsilon\partial_{y_\alpha}q_{3,ik}^\beta(y,z)\psi_{2\varepsilon}S_\varepsilon(\partial_{j}u_0^\beta),\\
&T_3=-\varepsilon^2E^{\alpha\beta}_{3,kij}(y,z)\partial_{x_k}\left(\psi_{2\varepsilon}S_\varepsilon(\partial_{j}u_0^\beta)\right),\\
&T_4=-\varepsilon^2q_{3,ik}^\beta(y,z)\partial_{x_\alpha}\left(\psi_{2\varepsilon}S_\varepsilon(\partial_{j}u_0^\beta)\right).
\end{aligned}\end{equation} We note that $I_{3,ij}^{\alpha\beta}(y,z)$ is Y-Z periodic due to $A(y,z)$ is Y-Z periodic. Recall that we assume that $Y=Z=(0,1)^n.$ Taking the Fourier transform of $I^{\alpha\beta}_{3,ij}(y,z)$ with respect to $z$ gives that
\begin{equation}\label{3.9}I_{3,ij}^{\alpha\beta}(y,z)=\sum_{k\in\mathbb{Z}^n}\widehat{I}_{3,kij}^{\alpha\beta}(y)e^{2\pi\sqrt{-1} kz},\end{equation}
where $\widehat{I}_{3,kij}^{\alpha\beta}(y)$ is given by
\begin{equation}\widehat{I}_{3,kij}^{\alpha\beta}(y)=\int_{(0,1)^n}I_{3,ij}^{\alpha\beta}(y,z)e^{-2\pi\sqrt{-1} kz}dz.\end{equation}
Clearly, as the notations in Lemma 2.8, we have
$$\begin{aligned}&q_{3,ij}^\beta(y,z)=\frac{\sqrt{-1}}{2\pi}\sum_{k\in\mathbb{Z}^n}\frac{k_\alpha}{|k|^{2}}\widehat{I}^{\alpha\beta}_{3,kij}(y)e^{2\pi\sqrt{-1} kz},\\ &f^{\alpha\beta}_{3,ij}(y,z)=-\frac{1}{4\pi^2}\sum_{k\in\mathbb{Z}^n}|k|^{-2}\widehat{I}_{3,kij}^{\alpha\beta}(y)e^{2\pi\sqrt{-1} kz}+\frac{1}{4\pi^2}\sum_{k\in\mathbb{Z}^n}\frac{k_\alpha k_\eta}{|k|^4}\widehat{I}^{\eta\beta}_{3,kij}(y)e^{2\pi\sqrt{-1}},\end{aligned}$$
and
\begin{equation}\label{3.11}\begin{aligned}
E_{3,hij}^{\alpha\beta}(y,z)&=\partial_{z_h}f^{\alpha\beta}_{3,ij}(y,z)-\partial_{z_i}f^{\alpha\beta}_{3,hj}(y,z)\\
&=\frac{\sqrt{-1}}{2\pi}\sum_{k\in\mathbb{Z}^n}k_i|k|^{-2}\widehat{I}^{\alpha\beta}_{3,khj}(y)e^{2\pi\sqrt{-1} kz}-
\frac{\sqrt{-1}}{2\pi}\sum_{k\in\mathbb{Z}^n}k_h|k|^{-2}\widehat{I}^{\alpha\beta}_{3,kij}(y)e^{2\pi\sqrt{-1} kz}\\
&\ \ \ -\frac{\sqrt{-1}}{2\pi}\sum_{k\in\mathbb{Z}^n}\frac{k_ik_\alpha k_\eta}{|k|^4}\widehat{I}^{\eta\beta}_{3,khj}(y)e^{2\pi\sqrt{-1} kz}+
\frac{\sqrt{-1}}{2\pi}\sum_{k\in\mathbb{Z}^n}\frac{k_hk_\alpha k_\eta}{|k|^4}\widehat{I}^{\eta\beta}_{3,kij}(y)e^{2\pi\sqrt{-1} kz}.\\
\end{aligned}\end{equation}

Then according to Lemma 2.10 (i), \begin{equation}\begin{aligned}||T_1||^2_{L^2(\Omega)}\leq C \varepsilon^2 \int_\Omega\left(\sum_{k\in\mathbb{Z}^n}|k|^{-1}|\nabla_y\widehat{I}^{\alpha\beta}_{3,kij}(x/\varepsilon)|\right)^2
\left|\psi_{2\varepsilon}S_\varepsilon(\nabla u_0)\right|^2dx\\
\leq C \varepsilon^4 \int_Y\left(\sum_{k\in\mathbb{Z}^n}|k|^{-1}|\nabla_y\widehat{I}^{\alpha\beta}_{3,kij}(y)|\right)^2dy\cdot||\nabla u_0||_{L^2(\Omega)}^2.
\end{aligned}\end{equation}
H\"{o}lder's inequality and Plancherel's Indetity  give that
\begin{equation}\begin{aligned}\int_{Y}\left(\sum_{k\in\mathbb{Z}^n}|k|^{-1}|\nabla_y\widehat{I}^{\alpha\beta}_{3,kij}(y)|\right)^2dy&\leq
\int_{Y}\sum_{k\in\mathbb{Z}^n}\left|\nabla_y\widehat{I}^{\alpha\beta}_{3,kij}(y)\right|^2dy\cdot \sum_{k\in\mathbb{Z}^n}|k|^{-2}\\
&\leq C\iint_{Y\times Z}\left|\nabla_yI^{\alpha\beta}_{3,ij}(y,z)\right|^2dzdy,
\end{aligned}\end{equation} in view of $(2.23)$,
\begin{equation}\label{2.17}\begin{aligned}
I_{3,ij}^{\alpha\beta}(y,z)=&a_{ik}^{\alpha\gamma}(y,z)\partial_{y_k}\chi_j^{\gamma\beta}(y)-a_{ik}^{\alpha\eta}(y,z)\partial_{z_k}\chi_l^{\eta\gamma}(y,z)\partial_{y_l}\chi_j^{\gamma\beta}(y)\\
&-\fint_Z\left(a_{ik}^{\alpha\gamma}(y,z)\partial_{y_k}\chi_j^{\gamma\beta}(y)-a_{ik}^{\alpha\gamma}(y,z)\partial_{z_k}\chi_l^{\gamma\eta}(y,z)\partial_{y_l}\chi_j^{\eta\beta}(y)\right)dz\\
=:&I_{31,ij}^{\alpha\beta}+I_{32,ij}^{\alpha\beta}+I_{33,ij}^{\alpha\beta},
\end{aligned}\end{equation}

then according  to $(\ref{1.3})$ and $(\ref{2.2})$,
\begin{equation}\iint_{Y\times Z}\left|\nabla_yI^{\alpha\beta}_{31,ij}(y,z)\right|^2dzdy\leq C,\end{equation}

and
\begin{equation}\label{3.20}\begin{aligned}
||\nabla_yI^{\alpha\beta}_{32,ij}(y,z)||_{L^2(Y\times Z)}\leq& ||\nabla_y A(y,z)||_{L^\infty(Y\times Z)}||\nabla_y \chi (y)||_{L^\infty(Y)}||\nabla_z \chi (y,z)||_{L^2(Y\times Z)}\\
&+||A(y,z)||_{L^\infty(Y\times Z)}||\nabla_y \chi(y)||_{L^\infty(Y)}||\nabla_y\nabla_z \chi(y,z)||_{L^2(Y\times Z)}\\
&+||A(y,z)||_{L^\infty(Y\times Z)}||\nabla^2_y \chi(y)||_{L^{(4+2\tau)/\tau}(Y)}||\nabla_z \chi(y,z)||_{L^{2+\tau}(Y\times Z)}\\
\leq& C,
\end{aligned}\end{equation} where we have used $(\ref{1.3})$, $(\ref{2.1})$, $(\ref{2.2})$, $(\ref{2.7})$ and the Sobolev embedding inequality in the above inequality. Therefore, combining $(3.22)-(3.26)$ gives that
\begin{equation}
||T_1||_{L^2(\Omega)}\leq C\varepsilon^2 ||\nabla u_0||_{L^2(\Omega)}.
\end{equation} Note that $q_{3,ij}^\beta$ has the similar form as $E_{3,hij}^{\alpha\beta}$, then we also have
\begin{equation}
||T_2||_{L^2(\Omega)}\leq C\varepsilon^2 ||\nabla u_0||_{L^2(\Omega)}.
\end{equation}
And similarly, according to the second line of $(2.27)$, we have $$||T_3||_{L^2(\Omega)}+||T_4||_{L^2(\Omega)}\leq C\varepsilon ||\nabla u_0||_{L^2(\Omega)}.$$
Thus we complete this proof.\end{proof}

\begin{thm}Under the assumptions in Lemma 3.2, then we have the following estimates
\begin{equation}||w_\varepsilon||_{H_0^1(\Omega)}+||z_\varepsilon-\int_\Omega z_\varepsilon||_{L^2(\Omega)}\leq C\varepsilon||\nabla^2 u_0||_{L^2(\Omega)}
+C||\nabla u_0||_{L^2(\Omega\setminus\Sigma_{5\varepsilon})}+C\varepsilon||\nabla u_0||_{L^2(\Omega)}.\end{equation}\end{thm}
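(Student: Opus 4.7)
The strategy is to apply the standard Stokes energy estimate of Lemma 2.2 directly to the system (3.10) governing $(w_\varepsilon,z_\varepsilon)$. Since $w_\varepsilon|_{\partial\Omega}=0$ and the compatibility condition (3.5) holds, Lemma 2.2 yields
\[
\|w_\varepsilon\|_{H^1_0(\Omega)}+\Bigl\|z_\varepsilon-\fint_\Omega z_\varepsilon\Bigr\|_{L^2(\Omega)}
\;\leq\; C\bigl(\|\tilde f\|_{L^2(\Omega)}+\|\operatorname{div}\phi\|_{L^2(\Omega)}\bigr).
\]
So the theorem reduces to bounding the two right-hand sides by the three terms appearing in the conclusion. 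Note that Lemma 3.2 already yields
\[
\|H_{21,i}^\alpha+H_{22,i}^\alpha+H_{23,i}^\alpha\|_{L^2(\Omega)}\leq C\varepsilon\|\nabla u_0\|_{L^2(\Omega)},
\]
so it remains to estimate $H_{1,i}^\alpha,\;H_{3,i}^\alpha,\;H_{4,i}^\alpha$ and $\operatorname{div}\phi$.

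For $H_1$, I would split $\nabla u_0-\psi_{2\varepsilon}S_\varepsilon(\nabla u_0)=\bigl[\nabla u_0-S_\varepsilon(\nabla u_0)\bigr]+(1-\psi_{2\varepsilon})S_\varepsilon(\nabla u_0)$. Boundedness of $A,\widehat A$ together with (\ref{2.22}) controls the first bracket by $C\varepsilon\|\nabla^2u_0\|_{L^2(\Omega)}$, while $1-\psi_{2\varepsilon}$ is supported in $\Omega\setminus\Sigma_{4\varepsilon}$, so a standard support argument for $S_\varepsilon$ bounds the second piece by $C\|\nabla u_0\|_{L^2(\Omega\setminus\Sigma_{5\varepsilon})}$. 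For $H_3$ and $H_4$, I would apply the product rule to $\partial_h\bigl(\psi_{2\varepsilon}S_\varepsilon(\partial_{x_j}u_0^\gamma)\bigr)$, which produces either $\psi_{2\varepsilon}S_\varepsilon(\nabla^2 u_0)$ (giving $C\varepsilon\|\nabla^2u_0\|_{L^2}$ through Lemma 2.10 and Remark 2.11) or a term with $\nabla\psi_{2\varepsilon}$, supported in $\Omega\setminus\Sigma_{4\varepsilon}$ with $|\nabla\psi_{2\varepsilon}|\leq C/\varepsilon$, contributing $C\|\nabla u_0\|_{L^2(\Omega\setminus\Sigma_{5\varepsilon})}$ after the explicit $\varepsilon$ cancels. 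The terms carrying $\partial_{y_j}\chi_k$ or $\partial^2_{y_jy_h}\chi_k$ are handled by the $W^{2,p}$ bound (\ref{2.2}), H\"older and Lemma 2.10, yielding $C\varepsilon\|\nabla u_0\|_{L^2}$; the $\varepsilon^2$-prefactored pieces of $H_4$ are similar with an extra power of $\varepsilon$ to spare.

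For $\operatorname{div}\phi$, I would expand the derivative using Leibniz on
\[
\phi^\beta=\varepsilon\chi_j^{\beta\gamma}(y)\,\psi_{2\varepsilon}S_\varepsilon(\partial_{x_j}u_0^\gamma)+\varepsilon^2\chi_j^{\beta\alpha}(y,z)[\,\cdots\,].
\]
The dangerous $O(1)$ piece that would arise from the inner derivative on $\chi_j^{\beta\gamma}(y)$ vanishes because $\partial_{y_\beta}\chi_j^{\beta\gamma}(y)=0$ by (1.9); similarly $\partial_{z_\beta}\chi_j^{\beta\alpha}(y,z)=0$ by (1.7). What remains carries an explicit $\varepsilon$ and can be bounded term by term by the same mechanism as for $H_3,H_4$: $C\varepsilon\|\nabla^2u_0\|_{L^2(\Omega)}+C\|\nabla u_0\|_{L^2(\Omega\setminus\Sigma_{5\varepsilon})}+C\varepsilon\|\nabla u_0\|_{L^2(\Omega)}$.

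The main source of difficulty in the proof has already been absorbed into Lemma 3.2 via the Fourier decomposition that decouples the two scales $y=x/\varepsilon$ and $z=x/\varepsilon^2$; in this theorem the remaining work is largely bookkeeping. The most delicate step is verifying the cancellation of leading terms in $\operatorname{div}\phi$ via $\operatorname{div}_y\chi_j(y)=\operatorname{div}_z\chi_j(y,z)=0$ and then controlling the many cross-interaction pieces between the correctors, the cutoff $\psi_{2\varepsilon}$ and the smoothing operator $S_\varepsilon$ in a way that keeps one factor of $\varepsilon$ everywhere except in the boundary-layer contribution, which must come out with weight $\|\nabla u_0\|_{L^2(\Omega\setminus\Sigma_{5\varepsilon})}$ rather than the smaller $\Sigma_{2\varepsilon}$ set, to accommodate the support of $S_\varepsilon$.
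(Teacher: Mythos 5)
Your overall route is the same as the paper's: reduce via the energy estimate of Lemma 2.2 to bounding $\|\tilde f\|_{L^2(\Omega)}+\|\operatorname{div}\phi\|_{L^2(\Omega)}$, invoke the preceding lemma for $H_{21}+H_{22}+H_{23}$, split $H_1$ into $\nabla u_0-S_\varepsilon(\nabla u_0)$ plus the cut-off discrepancy, and treat $H_3$, $H_4$, $\operatorname{div}\phi$ by the product rule together with the corrector bounds. Your observation that the $O(1)$ piece of $\operatorname{div}\phi$ coming from $\partial_{z_\beta}\chi_j^{\beta\alpha}(y,z)$ vanishes by $(1.7)$ is correct and in fact cleaner than the paper's treatment of that term.

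The one substantive weak point is your claim that the terms of $H_3$, $H_4$ (and the surviving pieces of $\operatorname{div}\phi$, e.g.\ $\varepsilon\,\operatorname{div}_y\chi_j^\alpha(x/\varepsilon,x/\varepsilon^2)\,\psi_{2\varepsilon}S_\varepsilon(\partial_ju_0^\alpha)$) are ``handled by the $W^{2,p}$ bound, H\"older and Lemma 2.10.'' Lemma 2.10 and Remark 2.11 apply only to a periodic weight of a \emph{single} scale, $g(\cdot/\varepsilon^\lambda)$. The factors appearing here, such as $\chi_j^{\beta\gamma}(x/\varepsilon,x/\varepsilon^2)\,\partial^2_{y_jy_h}\chi_k^{\gamma\eta}(x/\varepsilon)$ or $a_{ih}(y,z)\partial_{y_h}\chi_j(y,z)$, are genuinely two-scale and are not in $L^\infty$ (we only control $\sup_y\|\chi(y,\cdot)\|_{W^{1,2}(Z)}$ and $\fint_Z|\nabla_y\chi(y,z)|^2dz$), so no single-scale version of Lemma 2.10 applies directly and H\"older alone cannot decouple the scales. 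The paper resolves this by re-running the Fourier decomposition for exactly these terms: expand $\chi(y,z)=\sum_k\widehat\chi_k(y)e^{2\pi\sqrt{-1}kz}$ (or in $y$, depending on which of $(2.27)$ or $(2.28)$ is wanted), so that each summand becomes a single-scale function of $x/\varepsilon$ times a unimodular oscillation, with summability of the series guaranteed by $\fint_Z|\nabla_z\nabla_y\chi|^2dz\leq C$ from $(2.6)$. You clearly possess this tool --- you credit it with decoupling the scales in the $H_2$ estimate --- but you misjudge its scope by declaring the remaining work ``largely bookkeeping'': the Fourier separation must be invoked again for $H_3$, $H_4$ and $\nabla\phi$, and without it the step as you wrote it does not go through.
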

\begin{proof}According to $(3.10)$ and Lemma 2.2, \begin{equation}||w_\varepsilon||_{H_0^1(\Omega)}+||z_\varepsilon-\int_\Omega z_\varepsilon||_{L^2(\Omega)}\leq C (||\tilde{f}||_{L^2(\Omega)}+||\operatorname{div}\phi||_{L^2(\Omega)}).\end{equation}\end{proof}
In view of $(3.8)$,
\begin{equation}\begin{aligned}||H_{1,i}||_{L^2(\Omega)}\leq C||\nabla u_0-S_\varepsilon\left(\nabla u_0\right)||_{L^2(\Omega)}+C|| S_\varepsilon(\nabla u_0)-\psi_{2\varepsilon}S_\varepsilon(\nabla u_0)||_{L^2(\Omega)}\\
\leq C\varepsilon||\nabla^2 u_0||_{L^2(\Omega)}
+C||\nabla u_0||_{L^2(\Omega\setminus\Sigma_{5\varepsilon})},
\end{aligned}\end{equation}

Note that $\chi(y,z)$ is also Y-Z periodic, then imitating the proof of the estimate of $T_1$ in Lemma 3.3, we can obtain
\begin{equation}||H_{3,i}||_{L^2(\Omega)}+||H_{4,i}||_{L^2(\Omega)}\leq C\varepsilon||\nabla^2 u_0||_{L^2(\Omega)}
+C||\nabla u_0||_{L^2(\Omega\setminus\Sigma_{5\varepsilon})}+C\varepsilon||\nabla u_0||_{L^2(\Omega)}.\end{equation}

Therefore, \begin{equation}||\tilde{f}||_{L^2(\Omega)}\leq C\varepsilon||\nabla^2 u_0||_{L^2(\Omega)}
+C||\nabla u_0||_{L^2(\Omega\setminus\Sigma_{5\varepsilon})}+C\varepsilon||\nabla u_0||_{L^2(\Omega)}.\end{equation}

Next, we need to estimate $||\operatorname{div}\phi||_{L^2(\Omega)}$. In view of the definition $(3.6)$ of $\phi$, the first term in $\operatorname{div}\phi$ is easy to
estimate after noting $\operatorname{div}\chi_k^\alpha(x)=0$. The second term and the third term have the similar estimates after noting that $||\nabla_y \chi_k(y)||_{L^\infty}\leq C$, therefore, we just give the
estimate of the second term by using the Fourier transform methods to separate the different scales of $x$. Due to $A(y,z)$ is Y-Z periodic, then $\chi_j^\alpha(y,z)$ is also Y-Z periodic. Taking the Fourier transform with respect to $y$ of $\chi_j^\alpha(y,z)$ leads to
\begin{equation*}\chi_j^\alpha(y,z)=\sum_{k\in\mathbb{Z}^n}\widehat{\chi}_{kj}^\alpha(z)e^{2\pi\sqrt{-1} ky},\end{equation*}
where $\widehat{\chi}^\alpha_{kj}(z)$ is given by
\begin{equation*}\widehat{\chi}_{kj}^\alpha(z)=\int_{(0,1)^n}\chi_j^\alpha(y,z)e^{-2\pi\sqrt{-1} ky}dy.\end{equation*}
And $\operatorname{div}_z\chi_j^\alpha(y,z)=0$ yields $$\sum_{k\in\mathbb{Z}^n}\partial_{z_\beta}\widehat{\chi}_{kj}^{\beta\alpha}(z)e^{2\pi\sqrt{-1} ky}=0.$$
In view of $(3.6)$, the second term of $\operatorname{div}\phi$ is given by \begin{equation}\begin{aligned}&\varepsilon\operatorname{div}_y\chi_j^{\alpha}(x/\varepsilon,x/\varepsilon^2)\psi_{2\varepsilon}S_\varepsilon\left(\partial_{x_j}u_0^\alpha\right)
+\operatorname{div}_z\chi_j^{\alpha}(x/\varepsilon,x/\varepsilon^2)\psi_{2\varepsilon}S_\varepsilon\left(\partial_{x_j}u_0^\alpha\right)\\
&\ \ +\varepsilon^2\chi_j^{\alpha}(x/\varepsilon,x/\varepsilon^2)\operatorname{div}(\psi_{2\varepsilon}S_\varepsilon\left(\partial_{x_j}u_0^\alpha\right))\\
=&:J_1+J_2+J_3,
\end{aligned}\end{equation}

Similar to the proof of $T_1$ in Lemma 3.3, $J_1+J_3$ is easy to estimate. To estimate $J_2$ more accurately, collect a family of small cubes by $Z_{\varepsilon^2}^i=\varepsilon^2(i+Z)$ for
$i\in \mathbb{Z}^n$ with an index set $I_{\varepsilon^2}$, such that $\Sigma_\varepsilon\subset\cup_{i\in I_{\varepsilon^2}}\subset \Omega$,  and
$Z_{\varepsilon^2}^i\cap Z_{\varepsilon^2}^j={\o}$ if $i\neq j$. Therefore,
\begin{equation}\begin{aligned}
&||\operatorname{div}_z\chi_j^{\alpha}(y,z)\psi_{2\varepsilon}S_\varepsilon\left(\partial_{x_j}u_0^\alpha\right)||^2_{L^2(\Omega)}\\
=&4\pi^2\int_\Omega\left|\sum_{k\in\mathbb{Z}^n}\partial_{z_\beta}\widehat{\chi}_{kj}^{\beta\alpha}(x/\varepsilon^2)e^{2\pi\sqrt{-1} kx/\varepsilon}\psi_{2\varepsilon}
S_\varepsilon(\partial_j u_0^\alpha)\right|^2dx\\
\leq& C\sum_{i\in I_{\varepsilon^2}}\int_{Z_{\varepsilon^2}^i}\left|\sum_{k\in\mathbb{Z}^n}\partial_{z_\beta}\widehat{\chi}_{kj}^{\beta\alpha}(x/\varepsilon^2)e^{2\pi\sqrt{-1} kx/\varepsilon}\psi_{2\varepsilon}
S_\varepsilon(\partial_j u_0^\alpha)\right|^2dx\\
\leq& C\sum_{i\in I_{\varepsilon^2}}\int_{Z_{\varepsilon^2}^i}\left|\sum_{k\in\mathbb{Z}^n}\partial_{z_\beta}\widehat{\chi}_{kj}^{\beta\alpha}(x/\varepsilon^2)\left(e^{2\pi\sqrt{-1} kx/\varepsilon}-e^{2\pi\sqrt{-1} kz^i/\varepsilon}\right)
\psi_{2\varepsilon}S_\varepsilon(\partial_j u_0^\alpha)\right|^2dx\\
\leq& C\varepsilon^2\sum_{i\in I_{\varepsilon^2}}\int_{Z_{\varepsilon^2}^i}\left|\sum_{k\in\mathbb{Z}^n}|\partial_{z_\beta}\widehat{\chi}_{kj}^{\beta\alpha}(x/\varepsilon^2)|
\cdot\psi_{2\varepsilon}\cdot|S_\varepsilon(\partial_j u_0^\alpha)|\right|^2dx\\
\leq& C\varepsilon^2\int_{\Omega}\left|\sum_{k\in\mathbb{Z}^n}|\partial_{z_\beta}\widehat{\chi}_{kj}^{\beta\alpha}(x/\varepsilon^2)|
\cdot\psi_{2\varepsilon}\cdot|S_\varepsilon(\partial_j u_0^\alpha)|\right|^2dx,
\end{aligned}\end{equation}where $z^i$ is the center of $Z_{\varepsilon^2}^i$. H\"{o}lder's inequality and Plancherel's Indetity  give that
\begin{equation}\begin{aligned}\int_{Z}\left(\sum_{k\in\mathbb{Z}^n}|\nabla_{z}\widehat{\chi}_{kj}^{\alpha}(z)|\right)^2dz&\leq
\int_{Y}\sum_{k\in\mathbb{Z}^n}|k|^2\left|\nabla_{z}\widehat{\chi}_{kj}^{\alpha}(z)\right|^2dz\cdot \sum_{k\in\mathbb{Z}^n}|k|^{-2}\\
&\leq C\iint_{Y\times Z}\left|\nabla_y\nabla_z\chi_j^\alpha(y,z)\right|^2dzdy\leq C,
\end{aligned}\end{equation} where we have used $(2.6)$ in the above inequality.
Consequently, according to $(2.28)$, $(3.35)$ and $(3.36)$, $$||\operatorname{div}_z\chi_j^{\alpha}(y,z)\psi_{2\varepsilon}S_\varepsilon\left(\partial_{x_j}u_0^\alpha\right)||_{L^2(\Omega)}\leq C \varepsilon^3||\nabla u_0||_{L^2(\Omega)}.$$

Generally, we can estimate $||\nabla \phi||_{L^2(\Omega)}$. The difference from the proof of $H_{2,i}$ is that when estimating the term $||\nabla_y\chi(x/\varepsilon,x/\varepsilon^2)\psi_{2\varepsilon}S_\varepsilon(\nabla u_0)||_{L^2(\Omega)}$, we need to take the Fourier transform of $\nabla_y\chi(y,z)$ with respect to $z$ (then $(2.28)$ will be applicable);  and when estimating the term
$||\nabla_z\chi(x/\varepsilon,x/\varepsilon^2)\psi_{2\varepsilon}S_\varepsilon(\nabla u_0)||_{L^2(\Omega)}$, we need to take the Fourier transform of
$\nabla_z\chi(y,z)$ with respect to $y$ (then $(2.27)$ will be applicable). As a result, we will have
\begin{equation}||\nabla \phi||_{L^2(\Omega)}\leq C\varepsilon||\nabla^2 u_0||_{L^2(\Omega)}
+C||\nabla u_0||_{L^2(\Omega\setminus\Sigma_{5\varepsilon})}+C\varepsilon||\nabla u_0||_{L^2(\Omega)}.\end{equation}
Consequently, the desired estimate $(3.29)$ follows from $(3.33)$ and $(3.37)$.
\begin{rmk}In order to obtain better estimates, if $w_\varepsilon$ has the form
\begin{equation}\label{3.2}\begin{aligned}
w_\varepsilon^\beta(x)=&u_\varepsilon^\beta(x)-u_0^\beta(x)+\varepsilon \chi_j^{\beta\gamma}(x/\varepsilon)\psi_{2\varepsilon^\lambda}S_{\varepsilon^\lambda}\left(\partial_{x_j}u_0^\gamma\right)\\
&+\varepsilon^2\chi_j^{\beta\alpha}(x/\varepsilon,x/\varepsilon^2)\left[\psi_{2\varepsilon^\lambda}S_{\varepsilon^\lambda}\left(\partial_{x_j}u_0^\alpha\right)
-\partial_{y_j}\chi_{k}^{\alpha\gamma}(x/\varepsilon)\psi_{2\varepsilon^\lambda}S_{\varepsilon^\lambda}\left(\partial_{x_k}u_0^\gamma\right)\right],
\end{aligned}\end{equation}
where $0<\lambda$ is a constant which is to be chosen. In view of Remark 2.8, we need to assume $\lambda\leq 2$. (Actually, in view of the term
$a_{ih}^{\alpha\beta}\partial_{y_h}\chi_j^{\beta\lambda}(y,z)\partial_{y_j}\chi_{k}^{\gamma\eta}(z)\psi_{2\varepsilon^\lambda}S_{\varepsilon^\lambda}\left(\partial_{x_k}u_0^\eta\right)$ in $H_{3,i}^\alpha$ defined in $(\ref{3.5})$, we need $\lambda \leq 2$.  However, if $\lambda>1$, we need more regularity assumptions on $\chi_k^\beta(y)$ and $\chi_k^\beta(y,z)$). Consequently, careful computation shows that $\lambda=1$ is the best choice, which may declare that the scale of $\varepsilon$ dominates any other scales. The same
result holds for $w_\varepsilon$ of the form
\begin{equation}\label{3.2}\begin{aligned}
w_\varepsilon^\beta(x)=&u_\varepsilon^\beta(x)-u_0^\beta(x)+\varepsilon \chi_j^{\beta\gamma}(x/\varepsilon)\psi_{2\varepsilon^\mu}S_{\varepsilon^\lambda}\left(\partial_{x_j}u_0^\gamma\right)\\
&+\varepsilon^2\chi_j^{\beta\alpha}(x/\varepsilon,x/\varepsilon^2)\left[\psi_{2\varepsilon^\mu}S_{\varepsilon^\lambda}\left(\partial_{x_j}u_0^\alpha\right)
-\partial_{y_j}\chi_{k}^{\alpha\gamma}(x/\varepsilon)\psi_{2\varepsilon^\mu}S_{\varepsilon^\lambda}\left(\partial_{x_k}u_0^\gamma\right)\right],
\end{aligned}\end{equation} where $0<\mu<\lambda\leq 2$.
\end{rmk}
\section{Proof of Theorem 1.2, convergence rates for the pressure term}
To obtain the convergence rates for the pressure term, we introduce the following lemma whose proof may be founded in \cite{Suslina2012Homogenization}.
\begin{lemma}Let $\Omega\subset\mathbb{R}^n$ be a bounded $C^1$ domain. Then, for any function $u\in H^1(\Omega)$,
\begin{equation}\int_{\Omega\setminus\Sigma_{\varepsilon}}|u|^2\leq C\varepsilon||u||_{H^1(\Omega)}||u||_{L^2(\Omega)},\end{equation}
where $C$ depends only on $\Omega$.\end{lemma}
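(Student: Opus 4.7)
The plan is to reduce to a flat boundary by a standard partition-of-unity argument and then establish the estimate on the straightened half-space strip via the fundamental theorem of calculus combined with Cauchy–Schwarz, averaging in the transverse variable to produce the desired product $\|u\|_{H^1}\|u\|_{L^2}$ (rather than the cruder $\|u\|_{H^1}^2$).

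First I will cover $\partial\Omega$ by finitely many open sets $U_1,\dots,U_N$ and, using the $C^1$ regularity of $\Omega$, choose $C^1$ diffeomorphisms $\Phi_\ell:U_\ell\cap\Omega\to Q^+:=(-r_0,r_0)^{n-1}\times(0,r_0)$ sending $U_\ell\cap\partial\Omega$ onto $\{x_n=0\}$. A subordinate partition of unity $\{\eta_\ell\}$ with $\sum_\ell \eta_\ell=1$ on $\Omega\setminus\Sigma_{\varepsilon_0}$ for some $\varepsilon_0>0$ reduces the estimate, up to constants depending only on $\Omega$, to proving
\begin{equation*}
\int_{Q^+\cap\{0<x_n<c\varepsilon\}}|v|^2\,dx \;\le\; C\varepsilon\,\|v\|_{H^1(Q^+)}\,\|v\|_{L^2(Q^+)}
\end{equation*}
for $v\in H^1(Q^+)$, where $c$ depends on the Jacobians of the $\Phi_\ell$. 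For $\varepsilon\ge \varepsilon_0$ the estimate is trivial, so I assume $\varepsilon<\varepsilon_0$ and work in $Q^+$ with $c\varepsilon<r_0/2$.

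The key one-dimensional step is the identity
\begin{equation*}
v(x',x_n)^2 \;=\; v(x',t)^2 - 2\int_{x_n}^{t} v(x',s)\,\partial_s v(x',s)\,ds
\end{equation*}
valid for a.e.\ $x_n\in(0,c\varepsilon)$ and $t\in(c\varepsilon,r_0)$. I average this identity in $t$ over $(c\varepsilon,r_0)$, use $|\int| \le \int|\cdot|$ on the error term, and obtain
\begin{equation*}
|v(x',x_n)|^2 \;\le\; \frac{1}{r_0-c\varepsilon}\int_{c\varepsilon}^{r_0}|v(x',t)|^2\,dt \;+\; 2\int_{0}^{r_0}|v(x',s)|\,|\partial_s v(x',s)|\,ds.
\end{equation*}
Integrating in $x_n\in(0,c\varepsilon)$ picks up a factor $c\varepsilon$, then integrating in $x'\in(-r_0,r_0)^{n-1}$ and applying the Cauchy–Schwarz inequality to the cross term yields
\begin{equation*}
\int_{Q^+\cap\{0<x_n<c\varepsilon\}}|v|^2\,dx \;\le\; C\varepsilon\,\|v\|_{L^2(Q^+)}^2 + C\varepsilon\,\|v\|_{L^2(Q^+)}\|\nabla v\|_{L^2(Q^+)},
\end{equation*}
and since $\|v\|_{L^2}\le\|v\|_{H^1}$ this is bounded by $C\varepsilon\|v\|_{H^1(Q^+)}\|v\|_{L^2(Q^+)}$. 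Summing over the partition and reverting the change of variables gives the lemma.

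There is essentially no serious obstacle: the only care needed is that the flattening constants and the bounded overlap of the charts are absorbed into $C$ depending only on $\Omega$, and that the trivial regime $\varepsilon\ge\varepsilon_0$ (where $\Omega\setminus\Sigma_\varepsilon$ may fail to be covered by boundary charts alone) is handled separately by the obvious bound $\int_\Omega|u|^2\le \|u\|_{L^2}\|u\|_{H^1}\le \varepsilon_0^{-1}\varepsilon\|u\|_{L^2}\|u\|_{H^1}$.
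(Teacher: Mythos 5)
Your proof is correct. The paper does not prove this lemma at all — it simply cites Suslina for it — and your argument (boundary flattening plus the one‑dimensional identity $v(x',x_n)^2 = v(x',t)^2 - 2\int_{x_n}^{t} v\,\partial_s v\,ds$ averaged in $t$, which is what produces the product $\|u\|_{H^1}\|u\|_{L^2}$ rather than $\|u\|_{H^1}^2$) is exactly the standard proof found in the cited references, with the chart constants and the trivial regime $\varepsilon\ge\varepsilon_0$ handled appropriately.
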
 In view of the definition $(3.10)$ of $z_\varepsilon$,
\begin{equation}\begin{aligned}
z_\varepsilon&=p_\varepsilon-p_0+\varepsilon\partial_{y_i}q_{1,ik}^\beta(y,z)\psi_{2\varepsilon}S_\varepsilon(\partial_{j}u_0^\beta)
+\partial_{z_i}q_{1,ik}^\beta(y,z)\psi_{2\varepsilon}S_\varepsilon(\partial_{j}u_0^\beta)
+\varepsilon\partial_{y_i}q_{2,ik}^\beta(y)\psi_{2\varepsilon}S_\varepsilon(\partial_{k}u_0^\beta)\\
&\ \ +\varepsilon^2q_{1,ik}^\beta(y,z)\partial_{x_i}(\psi_{2\varepsilon}S_\varepsilon(\partial_{k}u_0^\beta))
+q_{2,ik}^\beta(y)\partial_{x_i}(\psi_{2\varepsilon}S_\varepsilon(\partial_{k}u_0^\beta))
+\varepsilon\partial_{y_i}q_{3,ik}^\beta(y,z)\psi_{2\varepsilon}S_\varepsilon(\partial_{k}u_0^\beta)\\
&\ \ +\partial_{z_i}q_{3,ik}^\beta(y,z)\psi_{2\varepsilon}S_\varepsilon(\partial_{k}u_0^\beta)
+\varepsilon^2q_{3,ik}^\beta(y,z)\partial_{x_i}(\psi_{2\varepsilon}S_\varepsilon(\partial_{k}u_0^\beta))\\
&=p_\varepsilon-p_0+\pi_k^\beta(y,z)\psi_{2\varepsilon}S_\varepsilon(\partial_{k}u_0^\beta)+\pi_k^\beta(y)\psi_{2\varepsilon}S_\varepsilon(\partial_{k}u_0^\beta)
-\pi_j^\gamma(y,z)\partial_{y_j}\chi_k^{\gamma\beta}(y)\psi_{2\varepsilon}S_\varepsilon(\partial_{k}u_0^\beta)\\
&\ \ +\varepsilon\partial_{y_i}q_{1,ik}^\beta(y,z)\psi_{2\varepsilon}S_\varepsilon(\partial_{k}u_0^\beta)
+\varepsilon q_{2,ik}^\beta(y)\partial_{x_i}(\psi_{2\varepsilon}S_\varepsilon(\partial_{k}u_0^\beta))
+\varepsilon\partial_{y_i}q_{3,ik}^\beta(y,z)\psi_{2\varepsilon}S_\varepsilon(\partial_{k}u_0^\beta)\\
&\ \ +\varepsilon^2q_{3,ik}^\beta(y,z)\partial_{x_i}(\psi_{2\varepsilon}S_\varepsilon(\partial_{k}u_0^\beta))
+\varepsilon^2q_{1,ik}^\beta(y,z)\partial_{x_i}(\psi_{2\varepsilon}S_\varepsilon(\partial_{k}u_0^\beta))\\
&=:p_\varepsilon-p_0+\pi_k^\beta(y,z)\psi_{2\varepsilon}S_\varepsilon(\partial_{k}u_0^\beta)+\pi_k^\beta(y)\psi_{2\varepsilon}S_\varepsilon(\partial_{k}u_0^\beta)
-\pi_j^\gamma(y,z)\partial_{y_j}\chi_k^{\gamma\beta}(y)\psi_{2\varepsilon}S_\varepsilon(\partial_{k}u_0^\beta)\\
&\ \ +\sum_{h=1}^{5}T_h,\\
&\text{with } y=x/\varepsilon,\ z=x/\varepsilon^2.
\end{aligned}\end{equation} where we have used $(2.16)$. $(2.21)$ and $(2.24)$ in the second equation. By using the Fourier transform methods (see the proof of $T_2$ in Lemma 3.3), we can obtain
\begin{equation}\begin{aligned}
\sum_{h_1}^5||T_h||_{L^2(\Omega)}&\leq C\varepsilon||\nabla^2 u_0||_{L^2(\Omega)}
+C||\nabla u_0||_{L^2(\Omega\setminus\Sigma_{5\varepsilon})}+C\varepsilon||\nabla u_0||_{L^2(\Omega)}\\
&\leq C\varepsilon^{1/2}||u_0||_{H^2(\Omega)}.
\end{aligned}\end{equation}Consequently, in view of $(3.30)$, we have
\begin{equation}||p_\varepsilon-p_0+\tilde{\pi}-\int_\Omega\tilde{\pi}||\leq C\varepsilon^{1/2}||u_0||_{H^2(\Omega)},\end{equation}
where $\tilde{\pi}=\pi_k^\beta(y,z)\psi_{2\varepsilon}S_\varepsilon(\partial_{k}u_0^\beta)+\pi_k^\beta(y)\psi_{2\varepsilon}S_\varepsilon(\partial_{k}u_0^\beta)
-\pi_j^\gamma(y,z)\partial_{y_j}\chi_k^{\gamma\beta}(y)\psi_{2\varepsilon}S_\varepsilon(\partial_{k}u_0^\beta)$ with $y=x/\varepsilon$ and $z=x/\varepsilon^2$, thus we complete the proof of Theorem 1.2.
\section{Proof of Theorem 1.1, convergence rates for the velocity term}

In this section, we study the convergence rates in $L^2$ and give the proof of Theorem 1.1. Actually, the proof of Theorem 1.1 is achieved by duality.
So we need the consider the adjoint problems: For any $G\in L^2(\Omega;\mathbb{R}^n)$, there exist $(v_\varepsilon,\theta_\varepsilon)$, $(v_0,\theta_0)\in H^1_0(\Omega;\mathbb{R}^n)\times L^2(\Omega)$ respectively solving
\begin{equation}
\left\{
\begin{aligned}
\mathcal{L}_{\varepsilon}^* v_{\varepsilon} +\nabla \theta_\varepsilon&=G  \quad\text { in } \Omega, \\
\operatorname{div}v_\varepsilon&=0 \quad\text { in } \Omega,\\
v_{\varepsilon}&=0  \quad\text { on } \partial \Omega,
\end{aligned}\right.
\end{equation} and
\begin{equation}
\left\{
\begin{aligned}
\mathcal{L}_{0} v_{0} +\nabla \theta_0&=G  \quad\text { in } \Omega, \\
\operatorname{div}v_0&=0 \quad\text { in } \Omega,\\
v_{0}&=0  \quad\text { on } \partial \Omega
\end{aligned}\right.
\end{equation}
with $$\int_\Omega \theta_\varepsilon=\int_\Omega \theta_0=0.$$

 Here we have used the notation: $\mathcal{L}_\varepsilon^*=-\operatorname{div}(A^*(x/\varepsilon,x/\varepsilon^2)\nabla)$ and $\mathcal{L}_0^*=-\operatorname{div}(\widehat{A}^*\nabla)$.
Moreover, we denote
\begin{equation}\begin{aligned}
\tilde{w}_\varepsilon^\beta(x)=&v_\varepsilon^\beta(x)-v_0^\beta(x)+\varepsilon \chi_j^{*,\beta\gamma}(x/\varepsilon)\psi_{10\varepsilon}S_\varepsilon\left(\partial_{x_j}v_0^\gamma\right)\\
&+\varepsilon^2\chi_j^{*,\beta\alpha}(x/\varepsilon,x/\varepsilon^2)\left[\psi_{10\varepsilon}S_\varepsilon\left(\partial_{x_j}v_0^\alpha\right)
-\partial_{y_j}\chi_{k}^{*,\alpha\gamma}(x/\varepsilon)\psi_{10\varepsilon}S_\varepsilon\left(\partial_{x_k}v_0^\gamma\right)\right],
\end{aligned}\end{equation} and
\begin{equation}\begin{aligned}
\tilde{z}_\varepsilon=&\theta_\varepsilon-\theta_0+\varepsilon^2\partial_{x_i}\left(q_{1,ik}^{*,\beta}(y,z)\psi_{10\varepsilon}S_\varepsilon(\partial_{j}u_0^\beta)\right)+\varepsilon \partial_{x_i}\left(q_{2,ik}^\beta(y)\psi_{10\varepsilon}S_\varepsilon(\partial_{j}u_0^\beta)\right)\\
&+\varepsilon^2\partial_{x_i}\left(q_{3,ik}^\beta(y,z)\psi_{10\varepsilon}S_\varepsilon(\partial_{j}u_0^\beta)\right).
\end{aligned}\end{equation}

 Note that $\tilde{w}_\varepsilon^\beta(x)=v_\varepsilon^\beta(x)-v_0^\beta(x)$ and $\tilde{z}_\varepsilon=\theta_\varepsilon-\theta_0$ if $x\in \Omega\setminus\Sigma_{10\varepsilon}$, and Theorem 3.4 yields
\begin{equation}\begin{aligned}||\tilde{w}_\varepsilon||_{H_0^1(\Omega)}+||\tilde{z}_\varepsilon-\int_\Omega \tilde{z}_\varepsilon||_{L^2(\Omega)}\leq & C\varepsilon||\nabla^2 v_0||_{L^2(\Omega)}
+C||\nabla v_0||_{L^2(\Omega\setminus\Sigma_{5\varepsilon})}+C\varepsilon||\nabla v_0||_{L^2(\Omega)}\\
\leq &C\varepsilon^{1/2}||v_0||_{H^2(\Omega)}, \end{aligned}\end{equation} since $\mathcal{L}^*_\varepsilon$ satisfies the same conditions as $\mathcal{L}_\varepsilon$.
 In view of $(3.10)$, we have
\begin{equation}
\begin{aligned} \int_{\Omega} w_{\varepsilon} G d x &=\left\langle\mathcal{L}_{\varepsilon}\left(w_{\varepsilon}\right), v_{\varepsilon}\right\rangle+\int_\Omega w_{\varepsilon} \nabla \theta_{\varepsilon} d x \\ &=\left\langle\operatorname{div}(\tilde{f})+\nabla z_{\varepsilon},
v_{\varepsilon}\right\rangle-\int_{\Omega}\theta_{\varepsilon} \operatorname{div}w_{\varepsilon} d x \\ &=-\int_{\Omega}
\tilde{f} \cdot \nabla \phi_{\varepsilon}-\int_{\Omega}\theta_{\varepsilon} \operatorname{div}\phi dx\\
&=:I_1+I_2, \end{aligned}\end{equation} where  in the last step we use the fact that $\operatorname{div}v_\varepsilon=0$ in $\Omega$. In view of $(3.11)$, there are many terms in $I_1$, but we just give the estimates of some typical terms. Firstly,
\begin{equation}\begin{aligned}
&\int_\Omega\left|\nabla u_0-\psi_{2\varepsilon}S_{\varepsilon}(\nabla u_0)\right||\nabla v_\varepsilon|dx\\
\leq & C \int_\Omega\left|\nabla u_0-S_{\varepsilon}(\nabla u_0)\right||\nabla v_\varepsilon|dx+\int_\Omega\left| S_{\varepsilon}(\nabla u_0) -\psi_{2\varepsilon}S_{\varepsilon}(\nabla u_0)\right||\nabla v_\varepsilon|dx\\
\leq & C\varepsilon||\nabla^2 u_0||_{L^2(\Omega)}||\nabla v_\varepsilon||_{L^2(\Omega)}+C||\nabla u_0||_{L^2(\Omega\setminus \Sigma_{5\varepsilon})}||\nabla v_\varepsilon||_{L^2(\Omega\setminus\Sigma_{4\varepsilon})}.\\
\end{aligned}\end{equation}
According to Lemma 3.3, we have
\begin{equation}
\left|\int_\Omega (H_{21,i}^\alpha+H_{22,i}^\alpha+H_{23,i}^\alpha) \partial_i v_\varepsilon^\alpha dx\right|\leq C\varepsilon||\nabla u_0||_{L^2(\Omega)}||\nabla v_\varepsilon||_{L^2(\Omega)}.\end{equation}
 To estimate the term $\left|\int_\Omega (H_{3,i}^\alpha+H_{4,i}^\alpha) \partial_i v_\varepsilon^\alpha dx\right|$, we just estimate the following typical term:
\begin{equation}\begin{aligned}
&\left|\int_\Omega \varepsilon a_{ih}^{\alpha\beta}\chi_j^{\beta\gamma}(x/\varepsilon)\partial_h\left(\psi_{2\varepsilon}S_\varepsilon
\left(\partial_{x_j}u_0^{\gamma}\right)\right)\partial_i v_\varepsilon dx\right|\\
\leq& C\int_\Omega | \chi(x/\varepsilon)|\cdot|\nabla\psi_{2\varepsilon}|\cdot|S_\varepsilon\left(\nabla u_0\right)|\cdot|\nabla v_\varepsilon |dx+C\varepsilon \int_\Omega | \chi(x/\varepsilon)|\psi_{2\varepsilon}\cdot|S_\varepsilon\left(\nabla^2 u_0\right)|\cdot|\nabla v_\varepsilon |dx\\
\leq& C\varepsilon||\nabla^2 u_0||_{L^2(\Omega)}||\nabla v_\varepsilon||_{L^2(\Omega)}+C||\nabla u_0||_{L^2(\Omega\setminus \Sigma_{5\varepsilon})}||\nabla v_\varepsilon||_{L^2(\Omega\setminus\Sigma_{4\varepsilon})},
\end{aligned}\end{equation} when estimating the other terms in $\left|\int_\Omega (H_{3,i}^\alpha+H_{4,i}^\alpha) \partial_i v_\varepsilon^\alpha dx\right|$, similar estimates will be obtained if we use the Fourier transform methods to separate the different scales of $x$.
In view of $(5.3)$,
\begin{equation}||\nabla v_\varepsilon||_{L^2(\Omega)}\leq\left(||\nabla v_0||_{L^2(\Omega)}+||\nabla \tilde{w}_\varepsilon||_{L^2(\Omega)}+||\nabla(\tilde{w}_\varepsilon- v_0-v_\varepsilon)||_{L^2(\Omega)}\right)\leq C ||v_0||_{H^2(\Omega)},\end{equation}

and according to $(5.5)$, \begin{equation}||\nabla v_\varepsilon||_{L^2(\Omega\setminus\Sigma_{4\varepsilon})}\leq\left(||\nabla \tilde{w}_\varepsilon||_{L^2(\Omega\setminus\Sigma_{4\varepsilon})}+||\nabla v_0||_{L^2(\Omega\setminus\Sigma_{4\varepsilon})}\right)\leq C \varepsilon^{1/2} ||v_0||_{H^2(\Omega)}.\end{equation}
Consequently, according to $(5.7)-(5.11)$, we have
\begin{equation}|I_1|\leq C \varepsilon ||u_0||_{L^2(\Omega)}||v_0||_{H^2(\Omega)}.\end{equation}
Similarly, we can obtain \begin{equation}|I_2|\leq C \varepsilon ||u_0||_{H^2(\Omega)}||\theta_0||_{H^1(\Omega)}.\end{equation} Also, by the $W^{2,2}$ estimates for the Stokes systems with the constant in $C^{1,1}$ domains,
\begin{equation}
||v_0||_{H^2(\Omega)}+||\theta_0||_{H^1(\Omega)}\leq C||G||_{L^2(\Omega)}.
\end{equation} Therefore, combining $(5.12)-(5.14)$ yields that $$||w_\varepsilon||_{L^2(\Omega)}\leq C\varepsilon ||u_0||_{H^2(\Omega)}.$$
In view of the definition $(3.3)$ of $w_\varepsilon$, we can obtain the following error estimates:
$$||u_\varepsilon-u_0||_{L^2(\Omega)}\leq C\varepsilon ||u_0||_{H^2(\Omega)},$$ with the method of Fourier transform, which completes the proof of Theorem 1.1.
\normalem

\end{document}